%First version 07/05/2014

\documentclass[final]{siamltex1213}
\usepackage{amssymb}
\usepackage{mathrsfs}

\newcommand{\dbR}{\mathbb{R}}

\newtheorem{remark}{\textit{Remark}}[section]

\newtheorem{mydefinition}{Definition}[section]

\setlength{\parskip}{1\parskip}

\makeatletter
%%%redefining eqalign, leqalign, and eqalignno

\def\leqalignno#1{\displ@y \tabskip \@centering \halign to\displaywidth
{\hfil  $\@lign \displaystyle {##}$\tabskip \z@skip &$\@lign
\displaystyle  {{}##}$\hfil \tabskip \@centering &\kern -\displaywidth
\rlap  {$\@lign ##$}\tabskip \displaywidth \crcr #1\crcr }}

\def\leqalign#1{\null \, \vcenter
{\openup \jot \m@th \ialign {\strut \hfil $\displaystyle {##}
$&$\displaystyle {{}##}$\hfil \crcr #1\crcr }}\,}

\def\eqalign#1{\null \,\vcenter
{\openup \jot \m@th \ialign {\strut \hfil $\displaystyle {##}
$&$\displaystyle {{}##}$\hfil \crcr #1\crcr }}\,}

\def\eqalignno#1{\displ@y \tabskip\@centering \halign to\displaywidth
{\hfil$\@lign\displaystyle{##}$\tabskip\z@skip &
$\@lign\displaystyle{{}##}$\hfil\tabskip\@centering
    &\llap{$\@lign##$}\tabskip\z@skip\crcr
    #1\crcr}}
\makeatother

\def\sqr#1#2{{\vcenter{\vbox{\hrule height.#2pt
              \hbox{\vrule width.#2pt height#1pt \kern#1pt \vrule width.#2pt}
          \hrule height.#2pt}}}}
%

%

%
%Double upper case roman letters: B C D E F H I J K L M N P R
%
\iffalse

\def\dbR{\hbox{\rm l\negthinspace R}}

\fi
%
%Lower case Greek letters
%
\def\a{\alpha}
\def\b{\beta}

\def\d{\delta}

\def\k{\kappa}
\def\l{\lambda}

%
%Upper case Greek letters
%

\def\O{\Omega}
%
%Calligraphic Capitals
%

%

\def\eq{\eqalign}

\def\ms{\medskip}

%
%Mathoperators
%

\def\essinf{\mathop{\rm ess.inf}}

\def\lds{\ldots}

%Double upper case roman letters:A G T U V W X Y Z
%\def\dbA{\hbox{\sl {\sl l}\negthinspace \rm A}}
%\def\dbG{\hbox{i\negthinspace\negthinspace\negthinspace G}}
%\def\dbT{\hbox{T\negthinspace\negthinspace\negthinspace T}}
%\def\dbU{\hbox{l\negthinspace\negthinspace\negthinspace U}}
%\def\dbV{\hbox{l\negthinspace V}}
%\def\dbW{\hbox{I\negthinspace W}}
%\def\dbY{\hbox{Y\negthinspace\negthinspace\negthinspace Y}}
%\def\dbZ{\hbox{\sl Z \negthinspace\negthinspace\negthinspace\negthinspace Z}
%\negthinspace\negthinspace\negthinspace\negthinspace_{-}\rm}
%\def\dbX{X\negthinspace\negthinspace\negthinspace\negthinspace X}

\title{Dynamic Programming for General  Linear Quadratic Optimal Stochastic Control with Random
Coefficients
%: Semimartingale Property of the Value Field, and Second Solution to General Backward Stochastic Riccati equation
\thanks{
This research was supported by  the National Natural Science Foundation of China (Grants \#10325101 and \#11171076), and by Science and Technology Committee, Shanghai Municipality (Grant No. 14XD1400400).
}}

\author{Shanjian Tang\thanks{Department of Finance and Control Sciences, School of Mathematical Science, Fudan University,
Shanghai 200433, China (sjtang@fudan.edu.cn).}}

\begin{document}

\maketitle
%\vspace{-1.5in}
%\slugger{sicon}{2003}{42}{1}{53--75}
%\vspace{1.2in}

\setcounter{page}{1}

\pagestyle{myheadings}
\markboth{SHANJIAN TANG}{DYNAMIC PROGRAMMING FOR STOCHASTIC LQ}

\begin{abstract}
We are concerned with the linear-quadratic optimal stochastic control problem where all the coefficients of the control system and the running weighting matrices in the cost functional are allowed to be predictable (but essentially bounded) processes and the terminal state-weighting matrix in the cost functional is allowed to be random. Under suitable conditions,
we prove that the value field $V(t,x,\omega), (t,x,\omega)\in [0,T]\times R^n\times \Omega$, is quadratic in $x$, and has the following form: $V(t,x)=\langle K_tx, x\rangle$ where $K$ is an essentially bounded nonnegative symmetric matrix-valued  adapted processes. Using the dynamic programming principle (DPP), we prove that $K$ is a continuous semi-martingale of the form $$K_t=K_0+\int_0^t \, dk_s+\sum_{i=1}^d\int_0^tL_s^i\, dW_s^i, \quad t\in [0,T]$$
with $k$ being a continuous process of bounded variation and
$$E\left[\left(\int_0^T|L_s|^2\, ds\right)^p\right] <\infty, \quad \forall p\ge 2; $$
and that $(K, L)$ with $L:=(L^1, \cdots, L^d)$ is a solution to the associated backward stochastic Riccati equation (BSRE), whose generator is highly nonlinear in the unknown pair of processes. The uniqueness is also proved via a localized completion of squares in a self-contained manner for a general BSRE.  The existence and uniqueness of adapted solution to a general BSRE was initially proposed by the French mathematician J. M. Bismut
[in \textit{SIAM J. Control \& Optim.}, 14(1976), pp.\ 419--444, and in \textit{S\'eminaire de Probabilit\'es} XII, Lecture Notes in Math. 649,
C. Dellacherie, P. A. Meyer, and M. Weil, eds., Springer-Verlag, Berlin, 1978, pp.\ 180--264], and subsequently listed by Peng [in \textit{Control of Distributed Parameter and Stochastic Systems (Hangzhou, 1998)},
S. Chen, et al., eds., Kluwer Academic Publishers, Boston, 1999, pp.\ 265--273] as the first open problem for backward stochastic differential equations. It had remained to be open until a general solution by the author [in \textit{SIAM J. Control \& Optim.}, 42(2003), pp.\ 53--75] via the stochastic maximum principle with a viewpoint of stochastic flow for the associated stochastic Hamiltonian system. The present paper is its companion, and gives the {\it second but more comprehensive} (seemingly much simpler, but appealing to the advanced tool of Doob-Meyer decomposition theorem, in addition to the DDP) adapted solution to a general BSRE via the DDP. Further extensions to the jump-diffusion control system and to the general nonlinear control system are possible.
\end{abstract}

\begin{keywords}
linear quadratic optimal stochastic control, random coefficients,
Riccati equation, backward stochastic differential equations, dynamic programming, semi-martingale
\end{keywords}

\begin{AMS}
93E20, 49K45, 49N10, 60H10
\end{AMS}

\section{Formulation of the problem and basic assumptions}\label{sec1}
Consider the following linear quadratic  optimal stochastic control (SLQ in short
form) problem: minimize over $u\in
\mathscr{L}^2_\mathscr{F}(0,T; \dbR^m)$ the following quadratic cost functional:
\begin{equation}\label{eq1.1}
J(u;0,x):=E^{0,x;u}\left[\langle MX_T,X_T\rangle +\int_0^T(\langle
Q_sX_s,X_s\rangle +\langle N_su_s,u_s\rangle )\, ds\right],
%\eqno(1.1)
\end{equation}
where $X$ is the solution of the following linear stochastic
control system:
\begin{equation}\label{eq1.2}
\left\{\eq{dX_t=&\;(A_tX_t+B_tu_t)\, dt
+\sum_{i=1}^d(C_t^iX_t+D_t^iu_t)\, dW_t^i,\cr X_0=&\;x\in \dbR^n.
\cr}\right.
%\eqno(1.2)$$
\end{equation}
Here, $\{W_t:=(W_t^1, \ldots, W_t^d)', 0\le t\le T\}$ is a
$d$-dimensional standard Brownian motion defined on some
probability space $(\O, \mathscr{F}, P)$. Denote by $\{\mathscr{F}_t,0\le t\le T\}$ the augmented natural filtration of the
standard Brownian motion $W$. The control $u$ belongs to the
Banach space $\mathscr{L}^2_\mathscr{F}(0,T;
\dbR^m)$, which consists of all $\dbR^m$-valued square integrable
$\{\mathscr{F}_t, 0\le t\le T\}$-adapted processes.  Denote by $\mathbb{S}^n$ the totality of $n\times n$ symmetric matrices, and by $\mathbb{S}^n_+$ the totality of $n\times n$ nonnegative matrices.

Throughout this paper, we make the following two assumptions on the coefficients of
the above problem.

(A1) Assume that the matrix processes $A: [0,T]\times \O\to
\dbR^{n\times n}$, $B:[0,T]\times \O\to \dbR^{n\times m}$;
$C^i:[0,T]\times \O\to \dbR^{n\times n}$, $D^i:[0,T]\times \O\to
\dbR^{n\times m}$, $i=1,\lds,d$;
$Q:[0,T]\times \O\to \mathbb{S}^n_+$, $N:[0,T]\times \O\to \mathbb{S}^m_+ $
 and the random matrix $M:\O\to \mathbb{S}^n_+$ are uniformly bounded and
 $\{{\cal F }_t, 0\le t\le T\}$-adapted or ${\cal F}_T$-measurable.

% (A2) Assume that the state weighting matrix process $Q$ and
% the control weighting matrix process $N$ are
%almost surely and almost everywhere (abbreviated hereafter as
%a.s.a.e.) symmetric and nonnegative.
% Also assume that the terminal state weighting random matrix $M$ is almost
%surely (hereafter abbreviated as a.s.)
% symmetric and nonnegative.

  (A2)  Assume that  the control weighting matrix process $N$ is  uniformly
 positive.

 \vskip0.5cm
 Define for $(t,K, L)\in [0,T]\times \mathbb{S}^n\times (\mathbb{S}^n)^d$,
\begin{equation}
  \eq{\mathscr{N}_t(K):=& N_t+\sum_{i=1}^d(D^i_t)'KD^i_t, \cr
   \mathscr{M}_t(K,L):=& KB_t+\sum_{i=1}^d(C^i_t)'KD^i_t+\sum_{i=1}^dL^iD^i_t. \cr}
\end{equation}
 For  $(t, K)\in [0,T]\times \mathbb{S}^n_+$ and $L=(L^1,\ldots, L^d)\in (\mathbb{S}^n)^d$, define
\begin{equation}\label{G}
\eq{G(t, K,L):=&\  A'_tK+KA_t+Q_t+\sum_{i=1}^d(C^i_t)'KC^i_t+\sum_{i=1}^d[(C^i_t)'L^i+L^iC^i_t]\cr
&\quad\quad -\mathscr{M}_t(K,L)\mathscr{N}_t^{-1}(K)\mathscr{M}_t'(K,L).\cr}
\end{equation}
Here, we use the prime  to denote the transpose of a vector or a matrix.
Associated to the above SLQ problem is the following backward stochastic Riccati equation (BSRE):
\begin{equation}\label{bsre}
\left\{\eq{dK_t=&\;-G(t, K_t,L_t)\, dt+\sum_{i=1}^dL_t^i\,
dW_t^i, \quad t\in [0,T);\cr K_T=&\;M,\qquad L_t:=(L_t^1, \ldots, L_t^d).\cr}\right.
\end{equation}
The generator is {\it highly nonlinear} in the  unknown pair of variables $(K, L)$.

\begin{mydefinition}\label{solution bsre}
A solution of BSRDE (\ref{bsre}) is defined as a pair $(K,L)$
of  matrix-valued adapted processes such that

{\rm (i)} $\int_0^T|L_t|^2\, dt + \int_0^T|G(t,K_t,L_t)|\, dt<\infty,\ a.s.;$

{\rm (ii)} The $m\times m$ matrix-valued process $\{\mathscr{N}_t(K_t), t\in [0,T]\}$ is $a.s.a.e.$
positive; and

{\rm (iii)} $K_t=M+\int_t^TG(s, K_s, L_s)\, ds-\int_t^T\sum_{i=1}^dL^i_s\, dW^i_s $ a.s. for all
$t\in [0,T].$
\end{mydefinition}

\ms
The adapted solution to a general BSRE~(\ref{bsre}) was initially proposed by the French mathematician J. M. Bismut~\cite{Bismut1976,Bismut1978}, and subsequently listed by Peng~\cite{Peng1999} as the first open problem for backward stochastic differential equations. It had remained to be open until a general solution by the author~\cite{Tang2003} via the stochastic maximum principle and using a viewpoint of stochastic flow for the associated stochastic Hamiltonian system. For more details on the historical studies on BSRE~(\ref{bsre}) and the progress, see the author's previous paper~\cite[Section 4, pages 60--61]{Tang2003} and the plenary lecture by Peng~\cite{Peng2010} at the International Congress of Mathematicians in 2010. In the paper, we shall give a novel proof to the existence for BSRE~(\ref{bsre}) via dynamic programming principle. A crucial point is that we can show the value field is a semi-martingale of both ``sufficiently good" parts of bounded variation and martingale.

The rest of our paper is organized as follows. Section 2 gives preliminaries. In Section 3, we prove that the value field $V(t,x, \omega)$ is quadratic in $x$.
In Section 4, we prove that the value field is a semi-martingale and  that BSRE~(\ref{bsre}) has an adapted solution. Section 5 is concerned with a verification theorem for the SLQ problem, and the uniqueness of solution to BSRE~(\ref{bsre}). Finally, in Section 6, we give some comments and possible extensions.

\section{Preliminaries}\label{sec2}

For each $u\in \mathscr{L}^2_\mathscr{F}(0,T; \dbR^m)$, the following linear stochastic differential equation
\begin{equation}\label{eq2.2}
\left\{\eq{dX_t=&(A_tX_t+B_tu_t)\, dt
+\sum_{i=1}^d(C_t^iX_t+D_t^iu_t)\, dW_t^i,\qquad \tau \le t\le T,
\cr X_s=& x\in \mathbb{R}^n,
\cr}\right.
\end{equation}
has a unique strong solution (see Bismut~\cite{Bismut1978}), denoted by $X^{s,x;u}$ with the superscripts indicating the dependence on the initial data $(s,x)$ and the control action.
We have the following well-known quantitative dependence of the solution $X^{s,x;u}$ on the initial data $(s,x)$ and the control action $u$.

\begin{lemma}\label{sde estimate} Let assumption (A1) be satisfied. For any $p\ge 1$, there is a positive constant $C_p$ such that for any initial state $\xi\in L^p(\Omega, \mathscr{F}_s, P;\mathbb{R}^n)$ and predictable control $u$ with
$$
E\left[\left(\int_s^T|u_r|^2\, dr\right)^{p/2}\right]<\infty,
$$
we have
\begin{equation}\label{state-estimate}
E\left[\max_{t\in [s,T]}|X^{s,\xi;u}|^p\biggm | \mathscr{F}_s\right]\le C_p\left(|\xi|^p+E\left[\left(\int_s^T|u_r|^2\, dr\right)^{p/2}\biggm| \mathscr{F}_s\right]\right).
\end{equation}
%As an immediate  consequence, we have the following estimate:
%\begin{equation}\label{state-estimate 1}
%E\left[\max_{t\in [s,T]}|X^{s,\xi;u}-X^{s,\tilde \xi;\tilde u}|^p\biggm | \mathscr{F}_s\right]\le C_p\left(|\xi-\tilde \xi|^p+E\left[\left(\int_s^T|u_r-\tilde u_r|^2\, dr\right)^{p/2}\biggm| \mathscr{F}_s\right]\right).
%\end{equation}
\end{lemma}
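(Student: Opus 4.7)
The plan is to establish the estimate by a standard Itô--BDG--Gronwall argument, adapted to the conditional setting given $\mathscr{F}_s$ because both the initial data $\xi$ and the coefficients are random.

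First, to avoid the non-smoothness of $x\mapsto |x|^p$ at the origin when $p<2$, I would regularize by applying Itô's formula to $\phi_\varepsilon(X_t^{s,\xi;u})$ with $\phi_\varepsilon(x):=(|x|^2+\varepsilon)^{p/2}$, on the interval $[s,t\wedge\tau_n]$, where $\tau_n:=\inf\{r\ge s:|X_r^{s,\xi;u}|\ge n\}\wedge T$ localizes the process so that all quantities below are integrable. Using the uniform boundedness of $A,B,C^i,D^i$ from (A1), the drift terms produced by Itô's formula are pathwise bounded by $C[\phi_\varepsilon(X_r)+\phi_\varepsilon(X_r)^{(p-1)/p}|u_r|]$, and the quadratic variation of the martingale part is controlled by $C\int_s^{\cdot}\phi_\varepsilon(X_r)^{(2p-2)/p}(|X_r|^2+|u_r|^2)\,dr$.

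Second, I would take the supremum over $t\in[s,T\wedge\tau_n]$ inside conditional expectation given $\mathscr{F}_s$, and apply the conditional Burkholder--Davis--Gundy inequality to the stochastic integral. The resulting fractional-power term involving $\sup_{r}\phi_\varepsilon(X_r)^{1/2}$ is absorbed into the left-hand side using Young's inequality with a small coefficient, while cross terms of the form $\phi_\varepsilon(X_r)^{(p-1)/p}|u_r|$ are split via Young into $\eta\phi_\varepsilon(X_r)+C_\eta|u_r|^p$. This leads to an inequality of the form
\[
f_n(t)\;\le\;\phi_\varepsilon(\xi)+C\int_s^t f_n(r)\,dr+C\,E\!\left[\left(\int_s^T|u_r|^2\,dr\right)^{p/2}\Big|\,\mathscr{F}_s\right],
\]
where $f_n(t):=E[\sup_{r\in[s,t\wedge\tau_n]}\phi_\varepsilon(X_r^{s,\xi;u})\mid\mathscr{F}_s]$, with $C$ depending only on $p$, $T$, and the coefficient bounds from (A1).

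Third, Gronwall's inequality applied to $f_n$ gives a uniform-in-$n$ bound; passing $n\to\infty$ via monotone convergence and $\varepsilon\downarrow 0$ via dominated convergence yields the stated estimate for $E[\max_{t\in[s,T]}|X_t^{s,\xi;u}|^p\mid\mathscr{F}_s]$. The main technical point, which is otherwise routine, lies in coordinating the simultaneous localization (to justify the BDG step and the vanishing of the martingale's conditional expectation) with the regularization $\varepsilon\downarrow 0$, and in choosing the Young-type splittings so that the Gronwall loop closes with the $u$-term appearing only on the right-hand side in the form $(\int_s^T|u_r|^2\,dr)^{p/2}$; no further difficulty is expected because all bounds from (A1) are deterministic and hence conditional estimates reduce to pathwise ones after conditioning.
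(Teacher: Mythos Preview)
The paper does not give a proof of this lemma: it is stated as a ``well-known quantitative dependence'' of the solution on the data, with existence and uniqueness referred to Bismut~\cite{Bismut1978}. Your It\^o--BDG--Gronwall outline is exactly the standard argument one would supply, and it is essentially correct.

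One small point deserves care. The pointwise Young splitting you write,
\[
\phi_\varepsilon(X_r)^{(p-1)/p}\,|u_r|\ \le\ \eta\,\phi_\varepsilon(X_r)+C_\eta\,|u_r|^p,
\]
after integration produces $\int_s^T|u_r|^p\,dr$, which for $p>2$ is \emph{not} dominated by $\bigl(\int_s^T|u_r|^2\,dr\bigr)^{p/2}$ (the inequality goes the other way on a finite interval). To land on the form stated in the lemma you should instead pull out the running supremum before applying Young:
\[
\int_s^t \phi_\varepsilon(X_r)^{(p-1)/p}|u_r|\,dr
\ \le\ \Bigl(\sup_{r\in[s,t]}\phi_\varepsilon(X_r)\Bigr)^{(p-1)/p}\sqrt{T}\,\Bigl(\int_s^T|u_r|^2\,dr\Bigr)^{1/2}
\ \le\ \eta\sup_{r\in[s,t]}\phi_\varepsilon(X_r)+C_\eta\Bigl(\int_s^T|u_r|^2\,dr\Bigr)^{p/2},
\]
and absorb the first term on the right into $f_n(t)$. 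The same manoeuvre handles the BDG term. With this adjustment the Gronwall loop closes exactly as you describe, uniformly in $n$ and $\varepsilon$, and the limits $n\to\infty$, $\varepsilon\downarrow 0$ go through by monotone and dominated convergence.
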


Consider  the initial-data-parameterized
SLQ problem: minimize over $u\in \mathscr{L}^2_\mathscr{F}(0,T; \dbR^m)$ the quadratic cost functional
\begin{equation}\label{eq2.1}
\hspace*{12pt}J(u; s,x):=E^{s,x;u}\left[\langle MX_T, X_T\rangle
+\int_s^T(\langle Q_rX_r, X_r\rangle
+\langle N_ru_r,u_r\rangle )\, dr\biggm | \mathscr{F}_s\right].
%\eqno(2.1)$$
\end{equation}
Define the value field
\begin{equation}\label{value}
V(s,x):=\essinf_{u\in \mathscr{L}^2_{\mathscr{F}_t}(s,T; \mathbb{R}^m)}J(u;s,x), \quad (s,x)\in [0,T]\times \mathbb{R}^n.
\end{equation}

Assumptions (A1) and (A2) imply that the above
SLQ problem has a unique optimal control for any $\xi\in L^2(\Omega, \mathscr{F}_s, P;\mathbb{R}^n)$, that is,  there is unique ${\overline u}\in \mathscr{U}_s$ such that
$$
V(s, \xi)=J({\overline u}; s, \xi).
$$
See Bismut~\cite{Bismut1978} for the
proof of such a result. A further step is to characterize the
optimal control.

We easily prove the following

\begin{lemma}\label{bound} Let Assumptions (A1) and (A2) be satisfied.  There is a positive constant $\lambda$  such that
$$
0\le V(s,\xi)\le J(0;s,\xi)\le \lambda |\xi|^2, \quad \forall (s,\xi) \in [0,T]\times L^2(\Omega, \mathscr{F}_s, P;\mathbb{R}^n).
$$
\end{lemma}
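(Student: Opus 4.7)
The plan is to establish the three inequalities separately. The nonnegativity $V(s,\xi)\ge 0$ will follow immediately from the structural assumption (A1): since $M\ge 0$, $Q\ge 0$, and $N\ge 0$ pointwise as symmetric matrices, every integrand in the cost $J(u;s,\xi)$ is nonnegative for every admissible $u$, hence $J(u;s,\xi)\ge 0$ a.s., and taking the essential infimum preserves this. The middle inequality $V(s,\xi)\le J(0;s,\xi)$ is just the definition of the essential infimum applied to the particular admissible control $u\equiv 0$, which clearly lies in $\mathscr{L}^2_\mathscr{F}(0,T;\dbR^m)$.

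The substantive part is therefore the quadratic upper bound on $J(0;s,\xi)$. Here I would plug $u\equiv 0$ into \eqref{eq2.2}, so that $X^{s,\xi;0}$ solves a homogeneous linear SDE with uniformly bounded coefficients. Applying Lemma \ref{sde estimate} with $p=2$ and vanishing control yields
\begin{equation*}
E\!\left[\max_{t\in[s,T]}|X^{s,\xi;0}_t|^2\,\bigm|\,\mathscr{F}_s\right]\le C_2\,|\xi|^2.
\end{equation*}
Combining this with the uniform boundedness of $M$ and $Q$ from (A1), say $|M|\le c$ and $|Q_r|\le c$, gives
\begin{equation*}
J(0;s,\xi)\le c\,E\!\left[|X_T|^2+\int_s^T|X_r|^2\,dr\,\bigm|\,\mathscr{F}_s\right]\le c(1+T)\,C_2\,|\xi|^2,
\end{equation*}
so the claim holds with $\lambda:=c(1+T)C_2$.

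I do not anticipate a genuine obstacle here: the argument only relies on linear SDE moment estimates already recorded in Lemma \ref{sde estimate} and on the uniform bounds in (A1); positivity of $N$ from (A2) is not even needed for this bound (it will become essential only for the matching lower bound via a square-completion argument later). The only minor care needed is to verify that the conditional moment estimate can be applied at a general $\mathscr{F}_s$-measurable initial datum $\xi\in L^2$, but this is exactly the form in which Lemma \ref{sde estimate} is stated.
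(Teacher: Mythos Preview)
Your proof is correct and follows essentially the same route as the paper: the first two inequalities are dismissed by the sign conditions in (A1) and the definition of the value field, and the upper bound $J(0;s,\xi)\le\lambda|\xi|^2$ is obtained exactly as in the paper by bounding $J(0;s,\xi)\le c(1+T)\,E[\max_{t}|X_t^{s,\xi;0}|^2\mid\mathscr{F}_s]$ and invoking Lemma~\ref{sde estimate} with $p=2$ and $u\equiv0$. Your remark that (A2) is not actually used here is accurate.
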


\begin{proof} In view of assumption (A1) and the definition of the value field $V$,  it is sufficient to show $J(0;s,\xi)\le \lambda |\xi|^2$, which is an immediate consequence of Lemma~\ref{state-estimate} and the following estimate:
$$\eq{J(0;s,\xi)\le\ & \lambda E\left[|X_T^{0,\xi;0}|^2+\int_0^T|X_t^{0,\xi;0}|^2\, dt\ \biggm |\mathscr{F}_s\right]\cr
\le\  & \lambda (1+T) E\left[\max_{t\in [0,T]}|X_t^{0,\xi;0}|^2\biggm |\mathscr{F}_s\right].\cr}
$$
\end{proof}

\section{The value field $V$ is quadratic in the space variable}
This section is an adaptation of Faurre~\cite{Faurre1968} to our SLQ problem with random coefficients.

We have
\begin{theorem} \label{quadratic} Let Assumptions (A1) and (A2) be satisfied.  The value field $V(s,x)$ is quadratic in $x$. Moreover, there is an essentially bounded  continuous nonnegative matrix-valued process $K$ such that
\begin{equation}
    V(s,x)=\langle K_s x,x \rangle, \quad \forall (s,x)\in [0,T]\times \mathbb{R}^n.
\end{equation}
\end{theorem}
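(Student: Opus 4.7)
The plan is to verify that $x\mapsto V(s,x)$ satisfies the two algebraic identities characterizing a nonnegative quadratic form---positive homogeneity of degree two, and the parallelogram law---and then define the symmetric matrix $K_s$ by polarization, reading off nonnegativity and boundedness from results already established.

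First I would exploit the linearity of the state equation~(\ref{eq2.2}) in the pair $(x,u)$: for $\lambda\in\mathbb{R}$ one has $X^{s,\lambda x;\lambda u}=\lambda X^{s,x;u}$, so the quadratic cost satisfies $J(\lambda u;s,\lambda x)=\lambda^2 J(u;s,x)$, and taking the essential infimum over $u$ yields $V(s,\lambda x)=\lambda^2 V(s,x)$, and in particular $V(s,0)=0$. For the parallelogram identity $V(s,x+y)+V(s,x-y)=2V(s,x)+2V(s,y)$, I would fix $\varepsilon>0$, take $\varepsilon$-optimal controls $u^x,u^y$ for initial data $(s,x),(s,y)$, observe that $X^{s,x+y;u^x+u^y}=X^{s,x;u^x}+X^{s,y;u^y}$ by linearity (and analogously for the difference), and apply the pointwise identity $\langle P(a+b),a+b\rangle+\langle P(a-b),a-b\rangle=2\langle Pa,a\rangle+2\langle Pb,b\rangle$ to each weight matrix $M$, $Q_r$, $N_r$ to obtain
$$J(u^x+u^y;s,x+y)+J(u^x-u^y;s,x-y)=2J(u^x;s,x)+2J(u^y;s,y).$$
This gives $V(s,x+y)+V(s,x-y)\le 2V(s,x)+2V(s,y)+O(\varepsilon)$; the reverse inequality follows symmetrically by taking $\varepsilon$-optimal controls $u^{x+y},u^{x-y}$ for the sums and splitting them as $v^x:=(u^{x+y}+u^{x-y})/2$, $v^y:=(u^{x+y}-u^{x-y})/2$. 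Letting $\varepsilon\downarrow 0$ produces equality.

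Combining homogeneity, $V(s,0)=0$, and the parallelogram identity, the map $x\mapsto V(s,x,\omega)$ is a quadratic form for $(s,\omega)$ outside a null set, so there is a unique symmetric $K_s(\omega)$ with $V(s,x)=\langle K_s x,x\rangle$. Its entries are recovered by polarization, e.g.\ $(K_s)_{ii}=V(s,e_i)$ and $(K_s)_{ij}=\frac{1}{2}[V(s,e_i+e_j)-V(s,e_i)-V(s,e_j)]$, so $K$ is $\{\mathscr{F}_s\}$-adapted since each $V(s,x)$ is $\mathscr{F}_s$-measurable as an essential infimum of $\mathscr{F}_s$-conditional expectations. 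Nonnegativity of $K_s$ is immediate from $V\ge 0$, while the essential boundedness $\|K_s\|\le\lambda$ follows at once from Lemma~\ref{bound}.

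The delicate point, and the one I expect to be the main obstacle, is the continuity of $s\mapsto K_s$: the polarization yields only a version at each fixed $s$, with exceptional null sets that may depend on $s$. I would handle this by first exhibiting a continuous (in $s$) modification of $V(\cdot,x)$ for each fixed $x\in\mathbb{R}^n$---the natural route being the DPP combined with the $L^p$-continuity in the initial time of the controlled state provided by Lemma~\ref{sde estimate}---and then passing to a countable dense subset of $x$'s and using the uniform bound $\|K_s\|\le\lambda$ to extend continuity to all $x$. Alternatively, the continuity is an automatic by-product of the semi-martingale decomposition of $K$ obtained via the DPP in Section~4, which one might prefer to appeal to rather than prove in isolation here.
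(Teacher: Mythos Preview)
Your proposal is correct and follows essentially the same route as the paper: the paper also reduces the quadratic structure to the two identities of square homogeneity and the parallelogram law, derived from the linearity of the state and the quadratic structure of $J$, and likewise reads off nonnegativity and the uniform bound from Lemma~\ref{bound}. The only cosmetic differences are that the paper uses exact optimal controls (which exist under (A1)--(A2)) rather than $\varepsilon$-optimal ones, and that the paper---like your second suggestion---does not prove the sample continuity of $K$ here but obtains it later as a by-product of the Doob--Meyer/semi-martingale argument in Section~4.
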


The state-quadratic property follows from  the following lemma.

\begin{lemma} Let Assumptions (A1) and (A2) be satisfied. The value field has the following two laws  in the state variable $x$ of (i) square homogeneity
$$
V(s, \xi x)=\xi^2 V(s,x), \quad \forall (s, x, \xi) \in [0,T]\times \mathbb{R}^n\times L^\infty(\Omega, \mathscr{F}_s, P)
$$
and (ii) parallelogram
$$
V(s, x+y)+V(s,x-y)=2 V(s,x)+2V(s,y), \quad \forall (s, x, y) \in [0,T]\times \mathbb{R}^n\times \mathbb{R}^n.
$$
\end{lemma}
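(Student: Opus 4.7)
The plan rests on the linearity of the control-to-state map $(x,u)\mapsto X^{s,x;u}$ together with the positive semidefinite quadratic form structure of the cost \eqref{eq2.1}. Since $\xi\in L^\infty(\O,\mathscr{F}_s,P)$ is $\mathscr{F}_s$-measurable, it behaves as a constant on $[s,T]$ with respect to both the Lebesgue and It\^o integrals in \eqref{eq2.2}, so $\xi X^{s,x;u}$ satisfies the state equation with initial datum $\xi x$ and control $\xi u$; uniqueness of strong solutions then yields the pathwise scaling identity
\begin{equation}\label{eq:scale}
X^{s,\xi x;\xi u}_t=\xi\,X^{s,x;u}_t,\qquad t\in [s,T],
\end{equation}
from which $J(\xi u;s,\xi x)=\xi^2 J(u;s,x)$ follows by pulling the $\mathscr{F}_s$-measurable factor $\xi^2$ out of the conditional expectation.

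For the easy direction of (i), since $\{\xi u:u\in\mathscr{L}^2_\mathscr{F}(s,T;\dbR^m)\}\subseteq \mathscr{L}^2_\mathscr{F}(s,T;\dbR^m)$ and the nonnegative $\mathscr{F}_s$-measurable factor $\xi^2$ commutes with the essential infimum, one has $V(s,\xi x)\le \essinf_u J(\xi u;s,\xi x)=\xi^2 V(s,x)$. For the reverse, fix an arbitrary $v\in\mathscr{L}^2_\mathscr{F}(s,T;\dbR^m)$, and for each integer $n\ge 1$ put $A_n:=\{|\xi|\ge 1/n\}\in\mathscr{F}_s$ and $u_n:=(v/\xi)\mathbf{1}_{A_n}$, which lies in $\mathscr{L}^2_\mathscr{F}$ because $|u_n|\le n|v|$. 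Applying \eqref{eq:scale} pathwise on $A_n$ (where $\xi^{-1}$ is bounded) and using that solutions of \eqref{eq2.2} coincide on any set $B\in\mathscr{F}_s$ whenever the controls coincide on $B\times[s,T]$, one obtains
$$J(v;s,\xi x)=\xi^2 J(v/\xi;s,x)=\xi^2 J(u_n;s,x)\ge \xi^2 V(s,x)\quad\text{on }A_n.$$
Letting $n\to\infty$ exhausts $\{\xi\ne 0\}$; on $\{\xi=0\}$ both sides of the claim vanish since $V(s,0)=0$ (witnessed by the zero control). Taking the essinf over $v$ yields $V(s,\xi x)\ge \xi^2 V(s,x)$.

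For (ii), let $\bar u$ and $\bar v$ be the unique optimal controls for $V(s,x+y)$ and $V(s,x-y)$, whose existence is granted by the result of Bismut~\cite{Bismut1978} recalled just before Lemma~\ref{bound}, and set $u_1:=(\bar u+\bar v)/2$, $u_2:=(\bar u-\bar v)/2$. Linearity of the control-to-state map together with the deterministic case $\xi=2$ of \eqref{eq:scale} identifies
$$X^{s,x;u_1}=\frac{1}{2}\bigl(X^{s,x+y;\bar u}+X^{s,x-y;\bar v}\bigr),\qquad X^{s,y;u_2}=\frac{1}{2}\bigl(X^{s,x+y;\bar u}-X^{s,x-y;\bar v}\bigr).$$
Applying the parallelogram identity $\|a+b\|^2+\|a-b\|^2=2\|a\|^2+2\|b\|^2$ to each of the positive semidefinite quadratic forms $\langle M\cdot,\cdot\rangle$, $\langle Q_r\cdot,\cdot\rangle$ and $\langle N_r\cdot,\cdot\rangle$ appearing in \eqref{eq2.1}, and integrating, yields
$$J(\bar u;s,x+y)+J(\bar v;s,x-y)=2J(u_1;s,x)+2J(u_2;s,y),$$
so $V(s,x+y)+V(s,x-y)\ge 2V(s,x)+2V(s,y)$. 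The reverse inequality follows symmetrically from the same manipulation starting with optimal controls for $V(s,x)$ and $V(s,y)$.

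The principal subtlety lies in part (i): the apparent division by the (not necessarily non-vanishing) random scalar $\xi$ must be rigorously handled via the truncation to $A_n$ combined with the locality of solutions to \eqref{eq2.2} under $\mathscr{F}_s$-measurable modifications of the control, itself a consequence of the ``take out what is known'' property of It\^o integrals against $\mathscr{F}_s$-measurable indicators. Part (ii) is then a clean application of the parallelogram law for quadratic forms, made possible by the bilinearity assertion underlying \eqref{eq:scale}.
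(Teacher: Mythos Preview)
Your proof is correct and follows essentially the same strategy as the paper: deduce (i) from the scaling identity $X^{s,\xi x;\xi u}=\xi X^{s,x;u}$ and $J(\xi u;s,\xi x)=\xi^2 J(u;s,x)$, and (ii) from the parallelogram law for the quadratic cost $J$ applied to optimal controls. Your treatment of (i) is in fact more careful than the paper's---which simply asserts that $\essinf_{u} J(\xi u;s,\xi x)=V(s,\xi x)$ without addressing the possibility that $\xi$ vanishes---while your argument for (ii) differs only cosmetically (you halve the controls, whereas the paper doubles the states and then invokes the already-established homogeneity with $\xi=2$).
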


\begin{proof} It is easy to derive from the linearity of the control system and the quadratic structure of the cost functional  the following two identities for any $u\in \mathscr{U}_s$,
$$
\xi X^{s,x;u}=X^{s,\xi x; \xi u}  , \quad \xi^2 J(u;s,  x)= J(\xi u; s, \xi x).
$$
Therefore, we have
$$
\xi^2 V(s,x)=\xi^2 \essinf_{u\in \mathscr{U}_s} J(u;s,x)=\essinf_{u\in \mathscr{U}_s} \xi^2 J(u;s,x)=\essinf_{u\in \mathscr{U}_s} J(\xi u;s,\xi x),
$$
which is equal to $V(s, \xi x)$ by definition,  immediately giving assertion (i).

Let us show assertion (ii). It is easy to see (see Bismut~\cite{Bismut1978})
that there are $\a, \b \in \mathscr{U}_s$ such that
$$
V(s,x+y)=J(\a; s,x+y), \quad V(s,x-y)=J(\b;s,x-y).
$$
Then, we easily see that
$$
V(s, (x+y)\pm (x-y))\le J(\a\pm \b; s, (x+y)\pm (x-y))
$$
and therefore,
$$
V(s, 2x)+V(s, 2y)\le J(\a + \b; s, 2x)+J(\a- \b; s, 2y).
$$
Since $J(u;s,x)$ is quadratic in the pair $(u,x)$ and satisfies the parallelogram
$$
2J(\a + \b; s, 2x)+2J(\a- \b; s, 2y)= J(2\a; s,2(x+y))+J(2\b;s,2(x-y)),
$$
we have
$$
V(s, 2x)+V(s, 2y)\le {1\over2}[ J(2\a; s,2(x+y))+J(2\b;s,2(x-y))],
$$
and therefore by the square homogeneity of $J(u;s,x)$ in the pair $(u,x)$
$$
V(s, x+y)+V(s,x-y)\le 2J(\a; s, x+y)+2J(\b;s,x-y)=2 V(s,x)+2V(s,y).
$$
By symmetry, it holds for $x':=x+y$ and $y':=x-y$:
$$
V(s,(x+y)+(x-y))+V(s,(x+y)-(x-y))\le 2 V(s, x+y)+2V(s, x-y)
$$
which leads by assertion (i) to the following desired reverse inequality
$$
4V(s,x)+4V(s,y)=V(s,2x)+V(s,2y)\le 2V(s,x+y)+2V(s,x-y).
$$
The proof is then complete.
\end{proof}

The nonnegativity and the essential bound of the process $K$ are immediate consequences of Lemma~\ref{bound}.

\section{Dynamic programming principle and the semi-martingale property of the value field}\label{sec3} For simplicity, define the function
\begin{equation}
l(t,x,u):=\langle Q_tx, x\rangle
+\langle N_tu,u\rangle, \quad (t,x,u)\in [0,T]\times \mathbb{R}^n\times \mathbb{R}^m
\end{equation}
and the set
\begin{equation}
\mathscr{U}_s:=\mathscr{L}^2_{\mathscr{F}}(s,T; \mathbb{R}^m).
\end{equation}
We denote by $\mathbb{V}(t, \cdot)$ the restriction of $V(t, \cdot)$ to $\mathbb{R}^n$. By definition, we have almost surely
$$
V(t,x)=\mathbb{V}(t,x), \quad \forall \  x \in \mathbb{R}^n.
$$
For any $\xi\in L^2(\Omega, \mathscr{F}_t, P; \mathbb{R}^n)$, in an analogous way to the  proof of Peng~\cite[Lemma 6.5, page 122]{Peng1997}, we also have  almost surely
$$
V(t,\xi)=\mathbb{V}(t,\xi).
$$

We have

\begin{theorem} \label{BP}(Bellman's Principle). Let Assumptions (A1) and (A2) be satisfied. We have

(i) For $s\le t\le T$ and $\xi\in L^2(\Omega, \mathscr{F}_s, P; \mathbb{R}^n)$,
$$
\mathbb{V}(s,\xi)=\mbox{\rm ess.}\inf_{u\in {\mathscr U}_s} E^{s,\xi; u} \left \{\int_s^tl(r, X_r, u_r)\, dr+ \mathbb{V}(t, X_t)  \biggm | {\mathscr F}_s\right\}.
$$
For the optimal control $\overline{u}\in \mathscr{U}_s$, we have
$$
\mathbb{V}(s,\xi)= E^{s,\xi; \overline{u}} \left \{\int_s^tl(r, X_r, \overline{u}_r)\, dr+ \mathbb{V}(t, X_t)  \biggm | {\mathscr F}_s\right\}.
$$

(ii) For $(s,x,u)\in [0,T]\times \mathbb{R}^n\times {\mathscr U}_s$, the process
$$\begin{array}{rcl}
 \k_t^{s,x;u}&:=&\displaystyle  \mathbb{V}(t,X_t^{s,x;u}) +\int_s^tl(r, X_r^{s,x;u}, u_r)\, dr
\end{array}
$$
defined for $t\in [s,T]$, is a submartingale w.r.t. $\{{\mathscr F}_t\}$; and for the optimal control $\overline{u}\in \mathscr{U}_s$, the process  $\k_t^{s,x;\overline{u}}, t\in [s,T]$, is a martingale w.r.t. $\{{\mathscr F}_t\}$.
\end{theorem}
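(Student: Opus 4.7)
The plan is the classical flow-plus-concatenation argument for the DPP, taking care of the random-coefficient features by two observations stated in the excerpt: (a) by Theorem~\ref{quadratic}, $V(t,\eta)=\mathbb{V}(t,\eta)=\langle K_t\eta,\eta\rangle$ for every $\eta\in L^2(\Omega,\mathscr{F}_t,P;\mathbb{R}^n)$, and (b) by Bismut's existence-uniqueness result cited after~(\ref{eq2.2}), for every such $\eta$ the SLQ problem started at $(t,\eta)$ admits a unique optimal control $\overline{u}(\eta)\in\mathscr{U}_t$. Combined with pathwise uniqueness of~(\ref{eq2.2}) -- which yields the flow property $X_r^{s,\xi;u_1\oplus u_2}=X_r^{t,X_t^{s,\xi;u_1};u_2}$ on $[t,T]$ for the concatenated control $u_1\oplus u_2$ -- the tower property gives the key identity
$$
J(u_1\oplus u_2;s,\xi)=E\left[\int_s^t l(r,X_r^{s,\xi;u_1},u_{1,r})\,dr+J(u_2;t,X_t^{s,\xi;u_1})\biggm|\mathscr{F}_s\right].
$$

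For the ``$\ge$'' half of~(i), fix $u=u_1\oplus u_2\in\mathscr{U}_s$ and combine the identity with the pointwise inequality $J(u_2;t,X_t^{s,\xi;u_1})\ge V(t,X_t^{s,\xi;u_1})=\mathbb{V}(t,X_t^{s,\xi;u_1})$ from~(a); taking essential infimum over $u$ closes this direction. For ``$\le$'', pick any $u_1\in\mathscr{U}_s$, observe that $X_t^{s,\xi;u_1}\in L^2(\Omega,\mathscr{F}_t,P;\mathbb{R}^n)$ by Lemma~\ref{sde estimate}, and use~(b) to select $\overline{u}_2:=\overline{u}(X_t^{s,\xi;u_1})\in\mathscr{U}_t$ realizing $J(\overline{u}_2;t,X_t^{s,\xi;u_1})=\mathbb{V}(t,X_t^{s,\xi;u_1})$ a.s.; then $u_1\oplus\overline{u}_2\in\mathscr{U}_s$ and plugging into the identity gives $\mathbb{V}(s,\xi)\le E[\int_s^t l\,dr+\mathbb{V}(t,X_t^{s,\xi;u_1})\mid\mathscr{F}_s]$, and an essential infimum over $u_1$ finishes~(i) (the right-hand side depending only on $u|_{[s,t)}$ so that taking the infimum over $\mathscr{U}_s$ is equivalent). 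The equality for the global optimizer $\overline{u}$ then follows by contradiction: if $J(\overline{u}|_{[t,T]};t,X_t^{s,\xi;\overline{u}_1})>\mathbb{V}(t,X_t^{s,\xi;\overline{u}_1})$ on a set of positive measure, the identity would force $J(\overline{u};s,\xi)>\mathbb{V}(s,\xi)$, contradicting optimality.

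Part~(ii) is an immediate corollary: applying the just-proved DPP to the pair $(t_1,X_{t_1}^{s,x;u})$ with the control $u|_{[t_1,T]}$, for any $s\le t_1\le t_2\le T$, gives
$$
\mathbb{V}(t_1,X_{t_1}^{s,x;u})\le E\left[\int_{t_1}^{t_2}l(r,X_r^{s,x;u},u_r)\,dr+\mathbb{V}(t_2,X_{t_2}^{s,x;u})\biggm|\mathscr{F}_{t_1}\right],
$$
which rearranges to $\k_{t_1}^{s,x;u}\le E[\k_{t_2}^{s,x;u}\mid\mathscr{F}_{t_1}]$; for $u=\overline{u}$ the equality part of~(i) upgrades submartingale to martingale. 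All conditional expectations are finite because $\mathbb{V}(t,X_t)=\langle K_tX_t,X_t\rangle$ lies in $L^1$ by essential boundedness of $K$ (Theorem~\ref{quadratic}) together with the $L^2$ moment bound~(\ref{state-estimate}).

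The one genuine subtlety I anticipate is the $\mathscr{F}_t$-measurable selection of $\overline{u}_2$ used in the reverse bound of~(i); in a generic problem one would need $\varepsilon$-optimal controls plus a pasting/selection argument. Here Assumption~(A2) supplies strict convexity of $J(\cdot;t,\eta)$, so $\overline{u}(\eta)$ is the \emph{unique} minimizer and the assignment $\eta\mapsto\overline{u}(\eta)$ is canonically defined, sidestepping any abstract measurable selection theorem. The other delicate step -- identifying $V(t,\eta)$ with $\mathbb{V}(t,\eta)$ for random $\eta$ -- is trivial here thanks to the explicit quadratic form $\langle K_t\eta,\eta\rangle$ furnished by Theorem~\ref{quadratic}.
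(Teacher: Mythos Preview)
Your proposal is correct and follows precisely the standard flow-plus-concatenation route that the paper itself defers to (it cites Krylov and Peng without giving further detail, and notes that (ii) follows immediately from (i)). Your use of Theorem~\ref{quadratic} and the Bismut optimizer to bypass $\varepsilon$-optimal controls and measurable selection is a legitimate LQ-specific shortcut, and there is no circularity since Section~3 does not rely on the DPP.
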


\begin{proof} It is easy to check that Assertion (ii) is an immediate consequence of Assertion (i). Assertion (i) is more or less standard, and the proof  is similar to that of Krylov~\cite[Theorem 6, Section 3, Chapter 3, page 150]{Krylov1977} or Peng~\cite[Theorem 6.6, page 123]{Peng1997}.
\end{proof}

From assertion (i), we have

\begin{corollary}\label{con mean cont} We have the following time continuity of $\mathbb{V}$ and $K$:  for any $(s,x)\in  [0,T]\times \mathbb{R}^n$, 
$$
\lim_{t\to s}E[\mathbb{V}(t,x)-\mathbb{V}(s,x)\, |\mathscr{F}_s]=\ 0, \quad
 \lim_{t\to s} E[K_t-K_s\, |\mathscr{F}_s]=\  0, \quad a.s..
$$
\end{corollary}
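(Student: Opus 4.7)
The plan is to sandwich $E[\mathbb{V}(t,x)\,|\,\mathscr{F}_s]$ between two quantities that both tend to $\mathbb{V}(s,x)$ as $t\downarrow s$, by applying Theorem~\ref{BP}(i) with two specific control choices on the interval $[s,t]$, and then to transfer the scalar convergence to the matrix $K$ by polarization. Left-continuity $t\uparrow s$ is handled analogously via DPP on $[t,s]$; note that in that case $\mathbb{V}(t,x)$ is already $\mathscr{F}_s$-measurable, so the outer conditional expectation is trivial.

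For the right-continuity I would use the admissible control $u\equiv 0$ on $[s,t]$ (concatenated with an optimal continuation on $[t,T]$) to obtain the inequality
\[
\mathbb{V}(s,x)\le E\Bigl[\int_s^t\langle Q_rX_r^0,X_r^0\rangle\,dr+\mathbb{V}(t,X_t^0)\Bigm|\mathscr{F}_s\Bigr],
\]
where $X^0:=X^{s,x;0}$, together with the equality coming from the optimal control $\bar u$ for $\mathbb{V}(s,x)$,
\[
\mathbb{V}(s,x)=E\Bigl[\int_s^t l(r,\bar X_r,\bar u_r)\,dr+\mathbb{V}(t,\bar X_t)\Bigm|\mathscr{F}_s\Bigr].
\]
The $dr$-integrals vanish as $t\downarrow s$ by conditional dominated convergence, with the integrable dominant for $\bar u$ furnished by the chain $\delta\int_s^T|\bar u_r|^2\,dr\le\int_s^T l(r,\bar X_r,\bar u_r)\,dr\le\mathbb{V}(s,x)\le\lambda|x|^2$, which combines assumption (A2) with Lemma~\ref{bound}. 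Using the quadratic representation of Theorem~\ref{quadratic}, for $Y\in\{X^0,\bar X\}$ one has
\[
\mathbb{V}(t,Y_t)-\mathbb{V}(t,x)=\langle K_t(Y_t-x),\,Y_t+x\rangle,
\]
whose conditional expectation is controlled by the essential bound on $K$ times $E[|Y_t-x|^2\,|\,\mathscr{F}_s]^{1/2}E[(|Y_t|+|x|)^2\,|\,\mathscr{F}_s]^{1/2}$; applying Lemma~\ref{sde estimate} to the SDE satisfied by $Y-x$ on $[s,t]$ (with the $L^2$-bound on $\bar u$ just extracted) shows that $E[|Y_t-x|^2\,|\,\mathscr{F}_s]\to0$, killing this cross term.

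Sandwiching the two bounds yields $\lim_{t\downarrow s}E[\mathbb{V}(t,x)-\mathbb{V}(s,x)\,|\,\mathscr{F}_s]=0$, which is the first claim. The matrix statement follows by polarization: since $E[\mathbb{V}(t,x)\,|\,\mathscr{F}_s]=\langle E[K_t\,|\,\mathscr{F}_s]\,x,x\rangle\to\langle K_sx,x\rangle$ for every $x\in\mathbb{R}^n$ and $K$ is symmetric, the symmetric matrices $E[K_t\,|\,\mathscr{F}_s]$ converge to $K_s$ entry-wise. The main obstacle I anticipate is the control of the optimal trajectory: one must extract a uniform $\mathscr{F}_s$-conditional $L^2$-bound on $\bar u$ before one can push $E[|\bar X_t-x|^2\,|\,\mathscr{F}_s]\to 0$ through Lemma~\ref{sde estimate}, and this is exactly where the uniform positivity of $N$ in assumption (A2) is indispensable.
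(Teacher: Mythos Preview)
Your argument is correct and follows essentially the same route as the paper's: both rely on the DPP identity at the optimal control $\bar u$, the quadratic form $\mathbb{V}(t,y)=\langle K_ty,y\rangle$ to compare $\mathbb{V}(t,X_t)$ with $\mathbb{V}(t,x)$, and Lemma~\ref{sde estimate} to send the remainder to zero. Two small remarks: the zero-control half of your sandwich is superfluous, since the optimal-control line is already an \emph{equality} and therefore pins down $E[\mathbb{V}(t,x)\mid\mathscr{F}_s]-\mathbb{V}(s,x)$ from both sides (this is exactly how the paper proceeds); and your chain $\int_s^T l\le\mathbb{V}(s,x)$ holds only after applying $E[\,\cdot\mid\mathscr{F}_s]$, which is harmless since the conditional $L^2$-bound on $\bar u$ is precisely what feeds into Lemma~\ref{sde estimate}.
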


\begin{proof} In view of Theorem~\ref{quadratic}, the second limit easily follows from the first one. It remains to prove the first limit. 

Assume without loss of generality that $s\le t$. We have 
$$
\mathbb{V}(s,x)= E^{s,x; \overline{u}} \left \{\int_s^tl(r, X_r, \overline{u}_r)\, dr+ \mathbb{V}(t, X_t)  \biggm | {\mathscr F}_s\right\}
$$
where $\overline{u}\in \mathscr{U}_s$ is the optimal control. Therefore, 
$$
|E[\mathbb{V}(t,x)-\mathbb{V}(s,x)\, |\mathscr{F}_s]|\le E^{s,x; \overline{u}} \left \{\int_s^tl(r, X_r, \overline{u}_r)\, dr+ |\mathbb{V}(t, X_t)-\mathbb{V}(t,x)|  \biggm | {\mathscr F}_s\right\}. 
$$
Since
$$
|\mathbb{V}(t, X_t^{s,x; \overline{u}})-\mathbb{V}(t,x)|\le \l (|x|+|X_t^{s,x; \overline{u}}|) |X_t^{s,x; \overline{u}}-x|, 
$$
using estimate~(\ref{state-estimate}), we have 
$$
\eq{&|E[\mathbb{V}(t,x)-\mathbb{V}(s,x)\, |\mathscr{F}_s]|\le \ \l E^{s,x; \overline{u}} \left \{\int_s^t ( |X_r|^2+|\overline{u}_r|^2)\, dr \biggm | {\mathscr F}_s\right\}\cr
&\ +\l \left\{|x|+E^{s,x; \overline{u}}\left[ \left(\int_s^t|\overline{u}_r|^2\, dr\right)^{1/2} \biggm | {\mathscr F}_s\right]\right\} E^{s,x; \overline{u}}\left[ \left(\int_s^t|\overline{u}_r|^2\, dr\right)^{1/2} \biggm | {\mathscr F}_s\right], \cr}
$$
which implies the desired limit. 
\end{proof}

Using Theorems~\ref{quadratic} and ~\ref{BP} , we can prove the following

\begin{theorem} \label{rep}
The value field $V$ is a semi-martingale of the following representation:
\begin{equation}\label{quadratic form}
\mathbb{V}(t,x)=\langle K_tx, x\rangle
\end{equation} where $K$ is an essentially bounded nonnegative symmetric matrix-valued continuous semi-martingale of the form
\begin{equation}\label{representation} K_t=K_0-\int_0^t d k_s +\sum_{i=1}^d\int_0^tL_s^i\, dW_s^i, \quad t\in [0,T]; \quad K_T=M\end{equation}
with $k$ being an $n\times n$ atrix-valued continuous process of bounded variation such that
\begin{equation}\label{formula}
\eq{ dk_s= &\  G(s,K_s,L_s)\, ds, \quad \hbox{ \rm almost everywhere } (s,\omega) \in [0,T]\times \Omega. \cr}
\end{equation}
and
\begin{equation}
\label{p-estimate}
E\left[\left(\int_0^T|L_s|^2\, ds\right)^p\right] <\infty, \quad \forall p\ge 2. \end{equation}
\end{theorem}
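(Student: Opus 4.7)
The plan is to combine the quadratic representation $\mathbb{V}(t,x) = \langle K_t x, x\rangle$ from Theorem~\ref{quadratic} with Bellman's principle (Theorem~\ref{BP}) to extract a Doob--Meyer decomposition for the matrix process $K$, then identify its drift by matching It\^o's formula with the optimality condition, and finally obtain the $L^p$ moments of $L$ via a Lyapunov-type It\^o argument followed by a BMO bootstrap.

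To establish the semi-martingale structure of $K$, I would fix $x\in\mathbb{R}^n$ and note that the zero-control state factors as $X_t^{0,x;0}=\Phi_t^0 x$, where the matrix-valued process $\Phi^0$ solves $d\Phi_t^0 = A_t\Phi_t^0\,dt+\sum_i C_t^i\Phi_t^0\,dW_t^i$ with $\Phi_0^0=I$ and is a.s.\ invertible with semi-martingale inverse. By Theorem~\ref{BP}(ii), the process
\[\kappa_t^{0,x;0} = \langle (\Phi_t^0)' K_t \Phi_t^0\,x,\,x\rangle + \int_0^t \langle (\Phi_r^0)' Q_r \Phi_r^0\,x,\,x\rangle\,dr\]
is a submartingale that is $L^p$-bounded for every $p$, thanks to the essential bound on $K$ and Lemma~\ref{sde estimate}. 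Applying the Doob--Meyer decomposition to the $n(n+1)/2$ submartingales obtained by letting $x$ range over $\{e_i\}_{i\le n}\cup\{e_i+e_j\}_{i<j}$, polarizing in $x$, and invoking the Brownian martingale representation theorem yields a decomposition of the matrix process $(\Phi_t^0)'K_t\Phi_t^0$ as the sum of a continuous $dW$-stochastic-integral martingale and a continuous matrix-valued bounded-variation process; the continuity of the compensator comes from Corollary~\ref{con mean cont}. The identity $K_t=((\Phi_t^0)^{-1})'\,[(\Phi_t^0)'K_t\Phi_t^0]\,(\Phi_t^0)^{-1}$ then yields the desired continuous semi-martingale representation $K_t=K_0-\int_0^t dk_s+\sum_i\int_0^t L_s^i\,dW_s^i$.

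For the drift identification, I would apply It\^o's formula to $\langle K_t X_t^{0,x;u},\,X_t^{0,x;u}\rangle$ along an arbitrary controlled trajectory; the resulting drift of $\kappa_t^{0,x;u}$ is a quadratic polynomial in $(X_t,u_t)$ whose coefficients are linear in $-dk_t/dt$, $L_t$, and the coefficients $(A,B,C,D,Q,N)$ of the problem. Matching against the submartingale condition of Theorem~\ref{BP}(ii) (with equality along the optimal control) and minimizing pointwise over $u$, the minimizer is the feedback $u_t^\star=-\mathscr{N}_t^{-1}(K_t)\mathscr{M}_t'(K_t,L_t)X_t$, and the requirement that this minimum vanish a.s.\ a.e.\ in $(t,\omega)$, together with the arbitrariness of $x\in\mathbb{R}^n$, forces exactly $dk_s=G(s,K_s,L_s)\,ds$.

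For the moment estimates, apply It\^o's formula to $\mathrm{Tr}(K_t^2)$; using $K\ge 0$ together with the nonnegativity of $\mathscr{M}\mathscr{N}^{-1}\mathscr{M}'$ to drop the unfavorable feedback contribution, one obtains for any stopping time $\tau\le T$
\[E\!\left[\int_\tau^T |L_r|^2\,dr\,\Big|\,\mathscr{F}_\tau\right]\le C + C\sqrt{E\!\left[\int_\tau^T |L_r|^2\,dr\,\Big|\,\mathscr{F}_\tau\right]},\]
which closes to a uniform BMO bound on the martingale $\int_0^\cdot L_s\,dW_s$. The Fefferman / John--Nirenberg inequality then upgrades $\int_0^T|L_s|^2\,ds$ to $L^p$ for every $p\ge 2$. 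The main technical obstacle in this program is the second paragraph above: while Doob--Meyer and polarization are routine, the passage from the semi-martingale property of the quadratic forms $\langle K_t X_t^0, X_t^0\rangle$ to that of $K_t$ itself hinges on the semi-martingale inverse of the random fundamental matrix $\Phi^0$, and the continuity of the compensator relies crucially on Corollary~\ref{con mean cont}.
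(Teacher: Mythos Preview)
Your first two steps track the paper's argument closely: both proofs use the submartingale family $\{\kappa_t^{0,x;0}\}$ with $x$ ranging over $e_i$ and $e_i\pm e_j$, polarize to obtain $(\Phi^0)'K\Phi^0$ as a continuous semi-martingale (continuity of the compensator coming from Corollary~\ref{con mean cont} and Doob--Meyer), invert the fundamental matrix to recover $K$, and then identify the drift by applying It\^o's formula to $\langle K_t X_t^{0,x;u},X_t^{0,x;u}\rangle$ and matching the submartingale inequality (with equality at the optimum) against the pointwise minimum in $v$. The paper makes this last identification rigorous by composing the measure $\gamma(ds,\cdot;v)$ with the inverse stochastic flow $Y^{0,\cdot;v}$, which is the part you only sketch; note in particular that at this stage $dk_s$ is a priori only a signed measure, so your phrase ``$-dk_t/dt$'' is shorthand for an argument that must first establish absolute continuity.

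Your Step~3, by contrast, is a genuinely different route. The paper works along the uncontrolled state, applies It\^o to $|V(t,X_t^{0,x;0})|^2$, bounds $\int_0^T|\langle L_t^i X_t,X_t\rangle|^2\,dt$ in $L^p$ via a BDG-type bootstrap, and then transfers the estimate to $L$ itself through $\Psi=\Phi^{-1}$ and H\"older. Your argument applies It\^o directly to $\mathrm{Tr}(K_t^2)$: because $K\ge 0$ and $\mathscr{M}\mathscr{N}^{-1}\mathscr{M}'\ge 0$, the trace $\mathrm{Tr}(K\,\mathscr{M}\mathscr{N}^{-1}\mathscr{M}')$ is nonnegative and can be dropped, leaving only terms bounded plus linear in $|L|$; the quadratic-in-$\sqrt{\cdot}$ inequality then closes to a uniform BMO bound for $\int L\,dW$, and John--Nirenberg yields all $L^p$. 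Both approaches are correct; yours is shorter and avoids the auxiliary flow, while the paper's stays within elementary martingale inequalities and does not invoke BMO theory.
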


\begin{proof} Theorem~\ref{quadratic} states that there is an essentially bounded nonnegative symmetric matrix-valued process $K$ such that (\ref{quadratic form}) holds true. The rest of the proof is divided into the following three steps.

{\bf Step 1.  $K$ is a semi-martingale of form~(\ref{representation}) in the Doob-Meyer decomposition. }
Let $e_i$ be the unit column vector of $\mathbb{R}^n$ whose $i$-th component is the number $1$ for $i=1,\ldots,n$. In view of Assertion (ii) of Theorem~\ref{BP},  we see that  for $x=e_i, e_i+e_j, e_i-e_j, i,j=1,\ldots,n$, 
$\{\k_t^{0,x;0}, t\in [0,T]\}$ is a sub-martingale,  and since 
$$|\k_t^{0,x;0}|\le \l |X_t^{0,x;0}|^2+\int_0^t |X_s^{0,x;0}|^2\, ds\le \l \max_{t\in [0,T]}|X_t^{0,x;0}|^2 \in L^1(\Omega, \mathscr{F}_T, P),$$ 
it is of class $D$. Since $V(t,x)$ is continuous in the sense of conditional mean in $t$ (see corollary~\ref{con mean cont}), $\{\k_t^{0,x;0}, t\in [0,T]\}$  is continuous in the sense of conditional mean in $s$.  In view of Doob-Meyer decomposition (see Protter~\cite[Theorem 11, page 112]{Protter2005}),  its bounded variational process  is continuous and increasing in time,  and $\{\k_t^{0,x;0}, t\in [0,T]\}$ is sample continuous.
Define the $n\times n$ symmetric matrix-valued process
\begin{equation}\label{submartingales}
    \Gamma_t:=(\k_t(i,j))_{1\le i, j\le n}
\end{equation}
where
\begin{equation}\label{kappa}
    \k_t(i,i):=\k_t^{0,e_i;0}, \quad \k_t(i,j):={1\over 4} [\k_t^{0,e_i+e_j;0}-\k_t^{0,e_i-e_j;0}], \quad 1\le i\not=j\le n.
\end{equation}
It is a $n\times n$ matrix-valued semi-martingale and the bounded variational process  in the Doob-Meyer decomposition is continuous in time.
Define
$$
\Phi_t:=(X_t^{0,e_1;0}, \cdots, X_t^{0,e_n;0}), \quad t\in [0,T].
$$
Then, we have
\begin{equation}\label{submartingales1}
    \Gamma_t=\Phi_t'K_t\Phi_t+\int_0^t\Phi_r'Q_r\Phi_r\, dr, \quad t\in [0,T];
\end{equation}
 and $\Phi$ satisfies the following matrix-valued stochastic differential equation (SDE):
\begin{equation}\label{SDE}
d\Phi_t= A_t \Phi_t\, dt+C_t^i\Phi_t\, dW_t^i, \quad t\in (0,T]; \quad \Phi_0= I_n.
\end{equation}
It is well-known that $\Phi_t$ has an inverse $\Psi_t:=\Phi_t^{-1}$, satisfying the following SDE:
\begin{equation}\label{SDE1}
d\Psi_t= \Psi_t (-A_t+C_t^iC_t^i)\, dt-\Psi_t C_t^i\, dW_t^i, \quad t\in (0,T]; \quad \Psi_0= I_n.
\end{equation}
Therefore, we have
\begin{equation}\label{semimartingales1}
  K_t= \Psi_t'\left( \Gamma_t  -\int_0^t\Phi_r'Q_r\Phi_r\, dr \right)\Psi_t, \quad t\in [0,T].
\end{equation}
Since $\Gamma$ is a semi-martingale, using It\^o-Wentzell formula, we see that  $K$ is a semi-martingale of form~(\ref{representation}) from the Doob-Meyer decomposition, with the bounded variational process $k$ being continuous in time. It remains to derive the formula~(\ref{formula}) for $k$ and  the estimate~(\ref{p-estimate}) for $L$.

{\bf Step 2. Formula for the bounded variational process $k$. } Define the function:
\begin{equation}\label{Hamiltonian}
  \eq{ F(t, x,v; K, L)=& 2\langle Kx, A_t x+B_tv\rangle + 2\langle L^ix, C_t^ix+D_t^iv\rangle\cr
  & +\langle L^i (C_t^ix+D_t^iv), C_t^ix+D_t^iv \rangle, \cr}
\end{equation}
for $(t,x,v, K,L)\in [0,T]\times \mathbb{R}^n\times \mathbb{R}^m\times \mathbb{S}^n\times (\mathbb{S}^n)^m$.
Using It\^o-Wentzell formula, we have
\begin{equation}\label{submartingales2}\left\{
  \eq{  dV(t, X_t^{0,x;v})=&\biggl [-\langle d k_t X_t^{0,x;v}, X_t^{0,x;v}\rangle+F(t,X_t^{0,x;v}, v; K_t, L_t)\, dt\biggr]\cr
  &+\biggl[\langle K_t (C_t^iX_t^{0,x;v}+D_t^iv), X_t^{0,x;v}\rangle \cr
  &+ \langle K_t X_t^{0,x;v}, (C_t^iX_t^{0,x;v}+D_t^iv)\rangle\cr
  &\quad\quad +\langle L_t^iX_t^{0,x;v}, X_t^{0,x;v}\rangle\biggr]\, dW_t^i,\quad t\in [0,T); \cr
   V(T,X_T^{0,x;v})=&\langle M X_T^{0,x;v}, X_T^{0,x;v}\rangle. \cr}\right.
\end{equation}
and
\begin{equation}\label{submartingales3}
  \eq{  \k_t^{0,x;v}=&\langle K_0 x, x\rangle+ \int_0^t\biggl [-\langle d k_s X_s^{0,x;v}, X_s^{0,x;v}\rangle+F(s,X_s^{0,x;v}, v; K_s, L_s)\, ds\cr
  &+l(s, X_s^{0,x;v}, v)\, ds\biggr]+\int_0^t\biggl[\langle K_s (C_s^iX_s^{0,x;v}+D_s^iv), X_s^{0,x;v}\rangle\cr
  & + \langle K_s X_s^{0,x;v}, (C_s^iX_s^{0,x;v}+D_s^iv)\rangle +\langle L_s^iX_s^{0,x;v}, X_s^{0,x;v}\rangle\biggr]\, dW_s^i, \quad t\in [0,T].\cr}
\end{equation}
Assertion (ii) of Theorem~\ref{BP} states that $ \{\k_t^{0,x;v}, t\in [0,T]\}$ is a sub-martingale for any $(v,x)\in \mathbb{R}^m\times \mathbb{R}^n$, yielding the following fact: for any $(x,v)\in \mathbb{R}^n\times \mathbb{R}^m$, we have $E\int_0^T\eta (s,x) \gamma(ds,x; v)\le 0$ for any essentially bounded nonnegative predictable process $\eta$ on $[0,T]\times \Omega$, where
\begin{equation}\label{submartingales4}
 \eq{ \gamma(ds,x;v):=& -\langle d k_s X_s^{0,x;v}, X_s^{0,x;v}\rangle+F(s,X_s^{0,x;v}, v; K_s, L_s)\, ds\cr
 & \quad \quad +l(s, X_s^{0,x;v}, v)\, ds; \cr}
\end{equation}
and for the optimal control $\overline{u}\in \mathscr{U}_0$, the process  $\k_t^{s,x;\overline{u}}, t\in [s,T]$, is a martingale w.r.t. $\{{\mathscr F}_t\}$, yielding the following fact:
for any $x\in \mathbb{R}^n$, we have  $E\int_0^T\eta (s,x) \gamma(ds,x;\overline{u})= 0$ for any essentially bounded nonnegative predictable process $\eta$ on $[0,T]\times \Omega$, where
\begin{equation}\label{martingales4}
 \eq{ \gamma(ds,x;\overline{u}):=& -\langle d k_s X_s^{0,x;\overline{u}}, X_s^{0,x;\overline{u}}\rangle+F(s,X_s^{0,x;\overline{u}}, \overline{u}_s; K_s, L_s)\, ds\cr
 & \quad \quad +l(s, X_s^{0,x;\overline{u}}, \overline{u}_s)\, ds. \cr}
\end{equation}

It is well-known that the stochastic flow  $X_s^{0,x;v}, x\in \mathbb{R}^n$ has an inverse $Y_s^{0,x;v}, x\in \mathbb{R}^n$. Since (see Yong and Zhou~\cite[Theorem 6.14, page 47]{YongZhou})
\begin{equation}\label{sde flow}
    X_s^{0,x;v}=\Phi_t x+\Phi_t\int_0^t\Psi_s(B_sv-C_s^iD_s^iv)\, ds+\Phi_t\int_0^t\Psi_s D_s^i v\, dW_s^i
\end{equation}
for $t\in [0,T]$, we have
\begin{equation}\label{inverse flow}
    Y_s^{0,x;v}=\Psi_t x-\int_0^t\Psi_s(B_sv-C_s^iD_s^iv)\, ds-\int_0^t\Psi_s D_s^i v\, dW_s^i, \quad t\in [0,T].
\end{equation}
More generally, we define for any $u\in \mathscr{U}_0$ and $t\in [0,T]$,
\begin{equation}\label{g-inverse flow}
    Y_s^{0,x;u}=\Psi_t x-\int_0^t\Psi_s(B_su_s-C_s^iD_s^iu_s)\, ds-\int_0^t\Psi_s D_s^i u_s\, dW_s^i.
\end{equation}
We have
\begin{equation}
   X_s^{0,y;u} \biggm|_{y=Y_s^{0,x;u}}=x, \quad \forall x\in \mathbb{R}^n.
\end{equation}

Incorporating the composition of $\gamma(s,\cdot;v)$ with the inverse flow $Y_s^{0,x;v}, x\in \mathbb{R}^n$, we have
\begin{equation}\label{submartingales5}
\eq{ 0\le &\ \gamma(ds,Y_s^{0,x;v};v)\cr
=& -\langle d k_s x, x\rangle+\left[F(s,x, v; K_s, L_s) +l(s, x, v)\right]\, ds \cr}
\end{equation}
and in a similar way, we have for almost everywhere $(s,\omega) \in [0,T]\times \Omega$,
\begin{equation}\label{martingales5}
\eq{ 0=&\ \gamma(ds,Y_s^{0,x;\overline{u}};\overline{u})\cr
=& -\langle d k_s x, x\rangle+\left[F(s,x, \overline{u}_s; K_s, L_s) +l(s, x, \overline{u}_s)\right]\, ds.\cr}
\end{equation}
Therefore, we have
\begin{equation}\label{drift}
 \langle d k_s x, x\rangle= \min_{v\in \mathbb{R}^m} \left[F(s,x,v; K_s, L_s) +l(s, x, v)\right]\, ds, \quad \forall x\in \mathbb{R}^n,
\end{equation}
which implies formula~(\ref{formula}).

{\bf Step 3. Estimate for $L$. }

From the theory of BSDEs, we have from BSDE~(\ref{submartingales2})
\begin{equation}\label{L-estimate0}
  \eq{ &\int_0^T\left|\langle K_t X_t^{0,x;v}, (C_t^iX_t^{0,x;v}+D_t^iv)\rangle +\langle L_t^iX_t^{0,x;v}, X_t^{0,x;v}\rangle\right|^2\, dt\cr
  = &\ |\langle M X_T^{0,x;v}, X_T^{0,x;v}\rangle|^2-|V(t, X_t^{0,x;v})|^2\cr
  &+2\int_0^TV(t, X_t^{0,x;v})\biggl [\langle k_t X_t^{0,x;v}, X_t^{0,x;v}\rangle-F(t,X_t^{0,x;v}, v; K_t, L_t)\biggr]\, dt\cr
  &-\int_0^TV(t, X_t^{0,x;v})\biggl[2\langle K_t (C_t^iX_t^{0,x;v}+D_t^iv), X_t^{0,x;v}\rangle-\langle L_t^iX_t^{0,x;v}, X_t^{0,x;v}\rangle\biggr] \, dW_t^i. \cr}
\end{equation}
Since $V(t, X_t^{0,x;v})\ge 0$, taking $v=0$ and using the inequality~(\ref{submartingales4}), we have
\begin{equation}\label{L-estimate2}
  \eq{ &\int_0^T\left|\langle K_t X_t^{0,x;0}, C_t^iX_t^{0,x;0}\rangle +\langle L_t^iX_t^{0,x;0}, X_t^{0,x;0}\rangle\right|^2\, dt\cr
  \le &\ |M| |X_T^{0,x;0}|^4+2\int_0^TV(t, X_t^{0,x;0})l(t,X_t^{0,x;0}, 0)\, dt\cr
  &-\int_0^TV(t, X_t^{0,x;0})\left[2\langle K_t C_t^iX_t^{0,x;0}, X_t^{0,x;0}\rangle-\langle L_t^iX_t^{0,x;0}, X_t^{0,x;0}\rangle\right] \, dW_t^i. \cr}
\end{equation}
Since $V(t, X_t^{0,x;0})=\langle K_t X_t^{0,x;0}, X_t^{0,x;0}\rangle$ and $K$ is uniformly bounded, there is a positive constant $\lambda$ such that
\begin{equation}\label{L-estimate3}
  \eq{ &\int_0^T\left|\langle L_t^iX_t^{0,x;0}, X_t^{0,x;0}\rangle\right|^2\, dt\cr
  \le &\  2\int_0^T\left|\langle K_t X_t^{0,x;0}, C_t^iX_t^{0,x;0}\rangle\right|^2\, dt+2|M| |X_T^{0,x;0}|^4\cr
  & +4\int_0^TV(t, X_t^{0,x;0})l(t,X_t^{0,x;0}, 0)\, dt\cr
  &-2\int_0^TV(t, X_t^{0,x;0})\left[2\langle K_t C_t^iX_t^{0,x;0}, X_t^{0,x;0}\rangle-\langle L_t^iX_t^{0,x;0}, X_t^{0,x;0}\rangle\right] \, dW_t^i\cr
  \le & \ \lambda \max_{t\in [0,T]}| X_t^{0,x;0}|^4-4\int_0^TV(t, X_t^{0,x;0})\langle K_t C_t^iX_t^{0,x;0}, X_t^{0,x;0}\rangle\, dW_t^i \cr
  &+2\int_0^TV(t, X_t^{0,x;0})\langle L_t^iX_t^{0,x;0}, X_t^{0,x;0}\rangle \, dW_t^i. \cr}
\end{equation}
Therefore, for $p\ge 1$, we have
\begin{equation}\label{L-estimate4}
  \eq{ &E\left(\int_0^T\left|\langle L_t^iX_t^{0,x;0}, X_t^{0,x;0}\rangle\right|^2\, dt\right)^p\cr
    \le & \ \lambda_p E\left[\max_{t\in [0,T]}| X_t^{0,x;0}|^{4p}\right]\cr
    &+\lambda_p E \left|\int_0^TV(t, X_t^{0,x;0})\langle K_t C_t^iX_t^{0,x;0}, X_t^{0,x;0}\rangle\, dW_t^i\right|^p\cr
  &+\lambda_p E\left|\int_0^TV(t, X_t^{0,x;0})\langle L_t^iX_t^{0,x;0}, X_t^{0,x;0}\rangle \, dW_t^i\right|^p \cr
  \le & \ \lambda_p E\left[\max_{t\in [0,T]}| X_t^{0,x;0}|^{4p}\right]\cr
    &+\lambda_p E \left[\int_0^T\left|V(t, X_t^{0,x;0})\langle K_t C_t^iX_t^{0,x;0}, X_t^{0,x;0}\rangle\right|^2\, dt\right]^{p/2}\cr
  &+\lambda_p E\left[\int_0^T\left|V(t, X_t^{0,x;0})\langle L_t^iX_t^{0,x;0}, X_t^{0,x;0}\rangle \right|^2\, dt\right]^{p/2} \cr
   \le & \ \lambda_p E\left[\max_{t\in [0,T]}| X_t^{0,x;0}|^{4p}\right]+\lambda_p E\left[\int_0^T|\langle L_t^iX_t^{0,x;0}, X_t^{0,x;0}\rangle|^2|X_t^{0,x;0}|^4\, dt\right]^{p/2} \cr
    \le & \ \lambda_p E\left[\max_{t\in [0,T]}| X_t^{0,x;0}|^{4p}\right]\cr
    &\quad\quad+\lambda_p E\left[\left(\int_0^T|\langle L_t^iX_t^{0,x;0}, X_t^{0,x;0}\rangle|^2\, dt\right)^{p/2}\max_{t\in [0,T]} |X_t^{0,x;0}|^{2p}\right]\cr
    \le & \ \lambda_p E\left[\max_{t\in [0,T]}| X_t^{0,x;0}|^{4p}\right]+{1\over2} E\left[\left(\int_0^T|\langle L_t^iX_t^{0,x;0}, X_t^{0,x;0}\rangle|^2\, dt\right)^p\right].  \cr}
\end{equation}
Consequently, we have for any $x\in \mathbb{R}^n$,
\begin{equation}\label{L-estimate5}
  E\left(\int_0^T\left|\langle L_t^iX_t^{0,x;0}, X_t^{0,x;0}\rangle\right|^2\, dt\right)^p \le  \ 2\lambda_p E\left[\max_{t\in [0,T]}| X_t^{0,x;0}|^{4p}\right]\le \lambda_p' |x|^{4p},
\end{equation}
which implies the following inequality
\begin{equation}\label{L-estimate6}
  E\left(\int_0^T\left|\Phi_t' L_t^i\Phi_t\right|^2\, dt\right)^p \le  \lambda_p.
\end{equation}
Hence,
\begin{equation}\label{L-estimate7}
\eq{ &  E\left(\int_0^T\left|L_t^i\right|^2\, dt\right)^p
\le  \ E\left(\int_0^T\left|\Psi_t'\Phi_t' L_t^i\Phi_t\Psi_t\right|^2\, dt\right)^p \cr
\le  & \ E\left(\int_0^T|\Psi_t'|^2|\Psi_t|^2 \left|\Phi_t' L_t^i\Phi_t\right|^2\, dt\right)^p \cr
\le &  \ E\left[\left(\int_0^T \left|\Phi_t' L_t^i\Phi_t\right|^2\, dt\right)^p \max_{t\in [0,T]}|\Psi_t|^{4p}\right]\cr
\le  & \ \left\{E\left[\left(\int_0^T \left|\Phi_t' L_t^i\Phi_t\right|^2\, dt\right)^{2p}\right]E\left[\max_{t\in [0,T]}|\Psi_t|^{8p}\right]\right\}^{1/2}\le \
\lambda_p. \cr}
\end{equation}

The proof is complete.
\end{proof}

\begin{remark} We have shown in Steps 1 and 2 that $(K,L)$ solves BSRE~(\ref{bsre}) with $K$ being nonnegative and uniformly bounded. Then from Tang~\cite[Theorem 5.1, page 62]{Tang2003}, we have the desired estimate. Here we have given a different proof to the estimate~(\ref{p-estimate}).
\end{remark}

Immediately, we have the following existence of adapted solution to BSRE~(\ref{bsre}).

\begin{corollary} (Existence result for BSRE). Let assumptions (A1) and (A2) be satisfied. Then  $(K,L)$ is an adapted solution to BSDE~(\ref{bsre}).
\end{corollary}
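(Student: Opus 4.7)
The plan is to read off the corollary directly from the preceding Theorem~\ref{rep}, once we check that the three items of Definition~\ref{solution bsre} are all satisfied by the pair $(K,L)$ produced there. The ingredients already in hand are: $K$ is essentially bounded, nonnegative and symmetric; the Doob--Meyer style representation~(\ref{representation}) with $K_T=M$; the identification $dk_s=G(s,K_s,L_s)\,ds$ from~(\ref{formula}); and the $L^p$-estimate~(\ref{p-estimate}) for $L$ for every $p\ge 2$.

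First I would handle condition (ii). By assumption (A2), $N_t\ge \delta I_m$ for some $\delta>0$, and since $K_t\in \mathbb{S}^n_+$ a.s.\ a.e., the symmetric part $\sum_i (D^i_t)'K_tD^i_t$ is nonnegative, so $\mathscr{N}_t(K_t)\ge \delta I_m$ uniformly. This gives the required positivity and, more usefully, the uniform bound $|\mathscr{N}_t^{-1}(K_t)|\le \delta^{-1}$.

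Next, condition (i). The estimate~(\ref{p-estimate}) with $p=2$ gives $\int_0^T|L_s|^2\,ds<\infty$ a.s. For the $G$-term, expand its definition in~(\ref{G}): all terms other than $\mathscr{M}_t(K_t,L_t)\mathscr{N}_t^{-1}(K_t)\mathscr{M}_t'(K_t,L_t)$ are at most linear in $(K,L)$ with uniformly bounded coefficients (by (A1)) and $K$ is essentially bounded, so they are integrable on $[0,T]$ pathwise since $\int_0^T|L_s|\,ds\le T^{1/2}(\int_0^T|L_s|^2ds)^{1/2}<\infty$ a.s. The remaining Riccati quadratic is controlled by $\delta^{-1}|\mathscr{M}_t(K_t,L_t)|^2$; since $\mathscr{M}$ is affine in $(K,L)$ with bounded coefficients, this is dominated by a constant times $1+|L_s|^2$, hence integrable a.s.\ by~(\ref{p-estimate}).

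Finally, condition (iii) is essentially a rewriting of~(\ref{representation}) in backward form. Substituting $dk_s=G(s,K_s,L_s)\,ds$ from~(\ref{formula}) into~(\ref{representation}) and using $K_T=M$ gives, for each $t\in[0,T]$,
\begin{equation*}
K_t=M+\int_t^T G(s,K_s,L_s)\,ds-\sum_{i=1}^d\int_t^T L_s^i\,dW_s^i, \qquad \text{a.s.,}
\end{equation*}
which is exactly the required integral equation. There is no serious obstacle here: the corollary is really a bookkeeping statement assembling the conclusions of Theorem~\ref{rep}. The only point deserving care is the verification that the Riccati quadratic term $\mathscr{M}_t(K_t,L_t)\mathscr{N}_t^{-1}(K_t)\mathscr{M}_t'(K_t,L_t)$ is pathwise integrable, and as noted this is immediate from the uniform lower bound on $\mathscr{N}_t(K_t)$ coming from (A2) together with the $L^2$-integrability of $L$ in~(\ref{p-estimate}).
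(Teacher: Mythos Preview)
Your proposal is correct and follows exactly the route the paper intends: the corollary is stated in the paper as an \emph{immediate} consequence of Theorem~\ref{rep} with no further argument given, and your proof simply spells out the verification of items (i)--(iii) of Definition~\ref{solution bsre} from the conclusions of that theorem. The only additional content you supply beyond the paper is the explicit check that $|G(s,K_s,L_s)|$ is pathwise integrable, which is indeed the one point worth writing down and which you handle correctly via the uniform lower bound $\mathscr{N}_t(K_t)\ge \delta I_m$ from (A2) and the $L^2$-bound on $L$ from~(\ref{p-estimate}).
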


\section{Verification theorem and uniqueness result for BSRE}
In the theory of linear quadratic optimal stochastic control, the Riccati equation as a nonlinear system of backward (stochastic) differential equations is an equivalent form of the underlying Bellman equation as a nonlinear backward (stochastic) partial differential equations, and both the optimal control and the value function are expected to be given in terms of the solution to the Riccati equation. The following verification theorem illustrates such a philosophy, which, however,  has more or less been addressed in the author's work~\cite[Theorem 3.2, page 60]{Tang2003}.

\begin{theorem} (Verification Theorem). Let assumptions (A1) and (A2) be satisfied. Let $(K,L)$ be an adapted solution to BSDE~(\ref{bsre}) such that $K$ is  essentially bounded and nonnegative (and consequently $L$ satisfies estimate~(\ref{p-estimate}) in view of Tang~\cite[Theorem 5.1, page 62]{Tang2003}). Then, (i) the following linear SDE
\begin{equation}\label{closed system}
\left\{\eq{ d\overline{X}_t=&\left[A_t-B_t\mathscr{N}_t^{-1}(K_t)\mathscr{M}_t'(K_t, L_t)\right]\overline{X}_t\, dt\cr
&+ \sum_{i=1}^d \left[C_t^i-D_t^i\mathscr{N}_t^{-1}(K_t)\mathscr{M}_t'(K_t, L_t)\right]\overline{X}_t\, dW_t^i, \quad t\in [0,T]; \cr
 \overline{X}_0=&x \cr}\right.
\end{equation}
has a unique strong solution $\overline{X}$ such that
\begin{equation}\label{square integrable}
E\left[\max_{t\in [0,T]}|\overline{X}_t|^2\right]< \infty;
\end{equation}
(ii) the following given process
\begin{equation}\label{control}
    \overline{u}_t=-\mathscr{N}_t^{-1}(K_t)\mathscr{M}_t'(K_t, L_t)\overline{X}_t, \quad t\in [0,T],
\end{equation}
belongs to $\mathscr{L}^2_{\mathscr{F}}(0,T;\mathbb{R}^m)$, and is the optimal control for the SLQ;
and (iii) the value field $V$ is given by
\begin{equation}\label{value vield}
    V(t,x)=\langle K_tx, x\rangle, \quad (t,x) \in [0,T]\times \mathbb{R}^n.
\end{equation}
\end{theorem}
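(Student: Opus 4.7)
The plan is to run the classical completing-the-squares argument for the BSRE~\eqref{bsre}, the only nonstandard issue being a localization to absorb the unboundedness of $L$. Set $\Theta_t:=\mathscr{N}_t^{-1}(K_t)\mathscr{M}_t'(K_t,L_t)$, so the feedback \eqref{control} reads $\overline u_t=-\Theta_t\overline X_t$ and the closed-loop equation \eqref{closed system} has coefficients $A_t-B_t\Theta_t$ and $C_t^i-D_t^i\Theta_t$. Because $K$ is essentially bounded, $\mathscr{N}_t^{-1}(K_t)$ is bounded by (A2), and $A,B,C^i,D^i$ are bounded, one has $|\Theta_t|\le c(1+|L_t|)$ for some constant $c$; hence by \eqref{p-estimate}, $\int_0^T(|A_t-B_t\Theta_t|^2+\sum_i|C_t^i-D_t^i\Theta_t|^2)\,dt<\infty$ a.s., which is sufficient for standard linear-SDE theory to yield a unique strong solution $\overline X$ of \eqref{closed system}. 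The bound \eqref{square integrable}, and indeed $E[\max_{t\in[0,T]}|\overline X_t|^{2q}]<\infty$ for every $q\ge 1$, follow from It\^o and Gronwall combined with \eqref{p-estimate}; consequently $\overline u\in\mathscr{L}^2_{\mathscr F}(0,T;\mathbb{R}^m)$ via Cauchy--Schwarz, supplying the integrability part of~(ii).

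For optimality, fix any $u\in\mathscr{U}_0$, let $X:=X^{0,x;u}$ and apply the It\^o--Wentzell formula to $\langle K_tX_t,X_t\rangle$ using both \eqref{eq1.2} and the BSRE dynamics $dK_t=-G(t,K_t,L_t)\,dt+\sum_iL_t^i\,dW_t^i$. Expanding $-G$ via \eqref{G}, the contributions $A_t'K_t+K_tA_t$, $\sum_i(C_t^i)'K_tC_t^i$ and $\sum_i[(C_t^i)'L_t^i+L_t^iC_t^i]$ cancel exactly against the drift and quadratic-variation terms generated by $X$, while the $Q_t$ in $-G$ cancels the $\langle Q_tX_t,X_t\rangle$ summand of $l(t,X_t,u_t)$. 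What remains is the completed-square identity
\begin{equation*}
d\langle K_tX_t,X_t\rangle + l(t,X_t,u_t)\,dt = \bigl\langle \mathscr{N}_t(K_t)(u_t+\Theta_tX_t),\,u_t+\Theta_tX_t\bigr\rangle\,dt + \sum_{i=1}^d\zeta_t^i\,dW_t^i,
\end{equation*}
where $\zeta_t^i:=2\langle K_t(C_t^iX_t+D_t^iu_t),X_t\rangle+\langle L_t^iX_t,X_t\rangle$. After localization (see below) and integration from $0$ to $T$, using $K_T=M$ and taking expectations, this gives
\begin{equation*}
J(u;0,x)=\langle K_0x,x\rangle+E\int_0^T\bigl\langle \mathscr{N}_t(K_t)(u_t+\Theta_tX_t),\,u_t+\Theta_tX_t\bigr\rangle\,dt.
\end{equation*}

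Since $\mathscr{N}_t(K_t)\ge N_t\ge\delta I>0$ by (A2) and $K\ge 0$, the rightmost expectation is nonnegative and vanishes iff $u_t=-\Theta_tX_t$ a.e., which is precisely the situation for the feedback $\overline u$ applied to its own trajectory $\overline X$; hence $\overline u$ is optimal and $V(0,x)=\langle K_0x,x\rangle$. Rerunning the identity on $[t,T]$ gives $V(t,x)=\langle K_tx,x\rangle$ for every $(t,x)$, proving~(iii). The principal technical obstacle is the localization: $L$ is only in $L^p$ for $p\ge 2$, so $\sum_i\zeta_t^i\,dW_t^i$ is a priori only a local martingale and the cross terms $\langle L_t^iX_t,X_t\rangle$ are not manifestly integrable. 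I would introduce the stopping times $\tau_n:=\inf\{t\in[0,T]:\int_0^t(|L_s|^2+|u_s|^2+|X_s|^2)\,ds\ge n\}\wedge T$, establish the identity up to $\tau_n$ (where the stochastic integral is a genuine martingale of zero mean), and then pass $n\to\infty$ using the moment bounds of Lemma~\ref{sde estimate}, the essential bound on $K$, and the $L^p$ moments of $\int_0^T|L_s|^2\,ds$ from \eqref{p-estimate} to dominate the $L$-linear cross terms via Cauchy--Schwarz.
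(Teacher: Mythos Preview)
Your completion-of-squares identity and the localization scheme for the inequality $J(u;0,x)\ge\langle K_0x,x\rangle$ (arbitrary $u\in\mathscr{U}_0$) are essentially what the paper does. The genuine gap is in your treatment of assertion~(i): the claim that $E[\max_{t}|\overline X_t|^{2q}]<\infty$ follows ``from It\^o and Gronwall combined with~\eqref{p-estimate}'' does not go through. The closed-loop coefficients are of order $1+|L_t|$, so It\^o on $|\overline X_t|^{2q}$ produces a drift of order $(1+|L_t|^2)|\overline X_t|^{2q}$; a pathwise Gronwall then gives a bound involving $\exp\bigl(c\int_0^T|L_s|^2\,ds\bigr)$, and polynomial moments of $\int_0^T|L_s|^2\,ds$ from~\eqref{p-estimate} do \emph{not} control this exponential. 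Trying instead to Gronwall in expectation fails because $|L_s|^2$ and $|\overline X_s|^{2q}$ cannot be decoupled without already knowing higher moments of $\overline X$, which is circular. Consequently your derivation of $\overline u\in\mathscr{L}^2_{\mathscr F}$ via Cauchy--Schwarz also breaks down.

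The paper avoids this by reversing the order of the two conclusions. It first runs the \emph{localized} completion-of-squares identity on the feedback pair $(\overline X,\overline u)$ itself, stopping at $\tau_j:=T\wedge\inf\{t:|\overline X_t|\ge j\}$; since the completed-square term vanishes for $u=\overline u$, this yields
\[
\langle K_0x,x\rangle=E\langle K_{\tau_j}\overline X_{\tau_j},\overline X_{\tau_j}\rangle+E\int_0^{\tau_j}l(t,\overline X_t,\overline u_t)\,dt,
\]
and the nonnegativity of $K$ and $Q$ together with $N\ge\delta I$ give $E\int_0^{\tau_j}|\overline u_t|^2\,dt\le\delta^{-1}\langle K_0x,x\rangle$. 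Fatou then yields $\overline u\in\mathscr{L}^2_{\mathscr F}(0,T;\mathbb{R}^m)$ \emph{first}. Only afterwards is $\overline X$ reinterpreted as the solution $X^{0,x;\overline u}$ of the \emph{open-loop} system~\eqref{eq1.2} with bounded coefficients $A,B,C^i,D^i$ and square-integrable control $\overline u$, for which Lemma~\ref{sde estimate} immediately gives~\eqref{square integrable}. Dominated convergence in the display above then produces $\langle K_0x,x\rangle=J(\overline u;0,x)$. The key point is that the Riccati identity, not a direct SDE estimate, is what supplies the integrability of $\overline u$.
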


\begin{remark} A proof using the stochastic maximum principle (the so-called stochastic Hamilton system) is given in Tang~\cite[Section 3, pages 58--60]{Tang2003}. The main difficulty of the proof comes from the appearance of $L$ in the optimal feedback law~(\ref{control}) since $L$ is in general not expected to be essentially bounded. Since the coefficients of the optimal closed system~(\ref{closed system}) contain $L$, we could directly have neither the integrability~(\ref{square integrable}) nor the square integrability of $\overline{u}$, which prevent us from going through the conventional method of ``completion of squares" in a straightforward way. In what follows, we get around the difficulty via the technique of localization by stopping times, and develop a localized version of the conventional method of ``completion of squares", which give a different self-contained proof.
\end{remark}

\begin{proof} Since the coefficients of the optimal closed system~(\ref{closed system}) is square integrable on $[0,T]$ almost surely, SDE~(\ref{closed system}) has a unique strong solution ${\overline X}$ (see Gal'chuk~\cite{Galchuk1978}).  Define for sufficiently large integer $j$, the stopping time $\tau_j$ as follows:
\begin{equation}\label{localization}
    \tau_j:=T\wedge \min \{t\ge 0: \ |{\overline X}_t|\ge j\},
\end{equation}
with the convention that $\min \emptyset=\infty$. It is obvious that $\tau_j\uparrow T$ almost surely as $j\uparrow \infty$.
Then, we have
\begin{equation}\label{identity}
    \langle K_0x,x \rangle= E\langle K_{\tau_j}{\overline X}_{\tau_j}, {\overline X}_{\tau_j}\rangle +E\int_0^{\tau_j}l(t,{\overline X}_t, \overline{u}_t )\, dt,
\end{equation}
which together with assumption (A2) implies the following (with the constant $\d>0$)
\begin{equation}
   E\int_0^{\tau_j}|{\overline u}_t|^2 \, dt \le\ \d^{-1}  E\int_0^{\tau_j}\langle N_t {\overline u}_t, \overline{u}_t\rangle \, dt\le \  \d^{-1}\langle K_0x,x \rangle.
\end{equation}
Using Fatou's lemma, we have $\overline{u}\in \mathscr{L}^2_{\mathscr{F}}(0,T;\mathbb{R}^m)$. Since ${\overline X}=X^{0,x;\overline{u}}$, we have from estimate~(\ref{state-estimate}) the integrability~(\ref{square integrable}). Assertion (i) has been proved.

From Assertion (i), we see that
$$
0\le \langle K_{\tau_j}{\overline X}_{\tau_j}, {\overline X}_{\tau_j}\rangle \le \lambda\max_{t\in [0,T]} |{\overline X}_t|^2 \in L^1(\Omega, \mathscr{F}_T, P)
$$
and
$$
0\le \int_0^{\tau_j}l(t,{\overline X}_t, \overline{u}_t )\, dt \ \le \ (\hbox{ \rm and } \bigm\uparrow ) \ \int_0^T l(t,{\overline X}_t, \overline{u}_t )\, dt \in L^1(\Omega, \mathscr{F}_T, P).
$$
Using Lebesgue's dominant convergence theorem, we have
\begin{equation}\label{dominant convergence}
\eq{\lim_{j\to \infty}E\langle K_{\tau_j}{\overline X}_{\tau_j}, {\overline X}_{\tau_j}\rangle=& E\langle K_T{\overline X}_T, {\overline X}_T\rangle, \cr
\lim_{j\to \infty} E\int_0^{\tau_j}L(t,{\overline X}_t, \overline{u}_t )\, dt=& E\int_0^T l(t,{\overline X}_t, \overline{u}_t )\, dt.\cr}
\end{equation}
In view of the equality~(\ref{identity}), we have
\begin{equation}\label{identity at limit}
 \langle K_0x,x \rangle=E\langle K_T{\overline X}_T, {\overline X}_T\rangle+E\int_0^T l(t,{\overline X}_t, \overline{u}_t )\, dt=J(\overline{u};0,x).
\end{equation}
It remains to prove that for any $u\in \mathscr{L}^2_{\mathscr{F}}(0,T;\mathbb{R}^m)$, we have $J(u;0,x)\ge \langle K_0x,x \rangle$.

For given $u\in \mathscr{L}^2_{\mathscr{F}}(0,T;\mathbb{R}^m)$ and sufficiently large integer $j$, define the stopping time $\tau_j^u$ as follows:
\begin{equation}\label{localization*}
    \tau_j^u:=T\wedge \min \{t\ge 0: \ |X_t^u|\ge j\},
\end{equation}
with the notation $X^u:=X^{0,x;u}$. It is obvious that $\tau_j^u\uparrow T$ almost surely as $j\uparrow \infty$.
Define
\begin{equation}\label{control*}
    \widetilde{u}_t:=-\mathscr{N}_t^{-1}(K_t)\mathscr{M}_t'(K_t, L_t)X^u_t, \quad t\in [0,T].
\end{equation}
Then, the restriction of $\widetilde{u}$ to the random time interval $[0,  \tau_j^u]$ lies in $\mathscr{L}^2_{\mathscr{F}}(0, \tau_j^u;\mathbb{R}^m)$ for any $j$.  Using BSRE~(\ref{bsre}) to complete the square in a straightforward manner, we have
\begin{equation}\label{identity*}
  \eq{ & E\langle K_{\tau_j^u}X^u_{\tau_j^u}, X^u_{\tau_j^u}\rangle +E\int_0^{\tau_j^u}l(t, X^u_t, u_t )\, dt\cr
    &\quad\quad\quad\quad=  \ \langle K_0x,x \rangle+E\int_0^{\tau_j^u}\langle \mathscr{N}_t^{-1}(K_t) (u_t- \widetilde{u}_t),  u_t- \widetilde{u}_t\rangle\, dt. \cr}
\end{equation}
Therefore, we have
\begin{equation}\label{inequality*}
    E\langle K_{\tau_j^u}X^u_{\tau_j^u}, X^u_{\tau_j^u}\rangle +E\int_0^{\tau_j^u}l(t, X^u_t, u_t )\, dt\ge \langle K_0x,x \rangle.
\end{equation}
In view of estimate~(\ref{state-estimate}) in Lemma~\ref{sde estimate}, we see that
$$
0\le \langle K_{\tau_j^u}X^u_{\tau_j^u}, X^u_{\tau_j^u}\rangle\le \lambda\max_{t\in [0,T]} |X_t^u|^2\in L^1(\Omega, \mathscr{F}_T, P)
$$
and
$$0\le \int_0^{\tau_j^u}l(t, X^u_t, u_t )\, dt \ \le \ (\hbox{ \rm and } \bigm\uparrow ) \  \int_0^T l(t, X^u_t, u_t )\, dt\in L^1(\Omega, \mathscr{F}_T, P).
$$
Passage to the limit in inequality~(\ref{inequality*}), again using Lebesgue's dominant convergence theorem, we have
\begin{equation}
    J(u; s,x)= E\langle K_TX^u_T, X^u_T\rangle +E\int_0^Tl(t, X^u_t, u_t )\, dt\ge \langle K_0x,x \rangle.
\end{equation}
The proof is then complete.
\end{proof}

Immediately, we have the following uniqueness of adapted solution to BSRE~(\ref{bsre}).

\begin{corollary} (Uniqueness result for BSRE). Let assumptions (A1) and (A2) be satisfied. Let $(\widetilde{K}, \widetilde{L})$ be an adapted solution to BSDE~(\ref{bsre}) such that ${\widetilde K}$ is  essentially bounded and nonnegative and $\widetilde{L}$ satisfies estimate~(\ref{p-estimate}). Then, ${\widetilde K}=K$ and $\widetilde{L}=L$.
\end{corollary}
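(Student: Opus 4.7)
The plan is to derive uniqueness as a direct consequence of the verification theorem just established. The core observation is that the verification theorem characterizes the value field $V$ in terms of the $K$-component of \emph{any} admissible adapted solution: applied to $(\widetilde K, \widetilde L)$ it yields $V(t,x) = \langle \widetilde K_t x, x\rangle$, while applied to $(K, L)$ (or directly via Theorem~\ref{rep}) it yields $V(t,x) = \langle K_t x, x\rangle$. Equating the two quadratic forms for every $x \in \mathbb{R}^n$ and using the symmetry of $\widetilde K_t - K_t$, polarization forces $\widetilde K_t = K_t$ at every $(t,\omega)$ outside a null set. The sample-continuity of both $K$ and $\widetilde K$ in $t$ (the former from Theorem~\ref{rep}, the latter because any solution of BSRE~\eqref{bsre} is automatically a continuous semi-martingale) then upgrades this to a single null-set exception valid simultaneously for all $t \in [0,T]$.

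For the $L$-component, subtracting the two copies of BSRE~\eqref{bsre} and using $\widetilde K \equiv K$ produces
$$\sum_{i=1}^d \int_t^T (\widetilde L^i_s - L^i_s)\, dW^i_s = \int_t^T \left[G(s, K_s, \widetilde L_s) - G(s, K_s, L_s)\right] ds, \quad t \in [0,T].$$
The left-hand side is a continuous local martingale and the right-hand side is a continuous process of bounded variation, and the two agree as processes indexed by $t$. By uniqueness of the canonical Doob--Meyer (semi-martingale) decomposition of the identically zero process, both sides vanish identically. Hence $\sum_i \int_0^\cdot (\widetilde L^i_s - L^i_s)\, dW^i_s \equiv 0$ on $[0,T]$, and the It\^o isometry (with a standard stopping-time localization handled by estimate~\eqref{p-estimate} for both $L$ and $\widetilde L$) forces $\widetilde L_s^i = L_s^i$ for almost every $(s,\omega)$ and every $i$.

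The main obstacle, such as it is, lies entirely in verifying that the verification theorem genuinely applies to both solutions and produces the representation of $V$, which has already been carried out inside that theorem via the localization-with-stopping-times completion-of-squares argument. Once the $V$-representation is in hand for both $(K,L)$ and $(\widetilde K, \widetilde L)$, the remaining work — polarization to match the matrix processes and semi-martingale decomposition uniqueness to match the martingale integrands — is structural, and the entire corollary is very short.
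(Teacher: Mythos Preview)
Your proposal is correct and follows essentially the same route the paper signals: the corollary is placed as an immediate consequence of the verification theorem (with the paper deferring the written-out argument to Tang~\cite{Tang2003}), and your argument---use assertion~(iii) of the verification theorem for both $(K,L)$ and $(\widetilde K,\widetilde L)$ to force $\widetilde K=K$ via polarization, then read off $\widetilde L=L$ from uniqueness of the continuous semimartingale decomposition of $\widetilde K-K\equiv 0$---is exactly that standard deduction.
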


The corollary and its proof can be found in Tang~\cite[the beginning paragraph of Section 8, page 70]{Tang2003}.

\section{Comments and possible extensions}
The results of this paper can be adapted to the singular case ($N$ is allowed to be
only nonnegative) but with suitable additional conditions such as the following:

(A3) Assume that the matrix process $\sum_{i=1}^d(D^i)'D^i$
 and the terminal state weighting random matrix $M$ are uniformly positive.

This subject will be detailed elsewhere.

 The singular case has received much recent interests because of its appearance in
 financial mean-variance problems. More generally, $N$ can also be possibly
 negative---this is the so-called indefinite case. On these
 features, the interested reader  is referred to  Chen and Yong
 \cite{new5}, Hu and Zhou~\cite{HuZhou2003},
 Kohlmann and Tang \cite{8, 11}, Yong and Zhou \cite{YongZhou}, and the references therein.

 Finally, the main results of the paper can also be adapted to the quadratic optimal control problem for linear
 stochastic differential system driven by jump-diffusion processes under suitable assumptions. The details will be presented elsewhere.

Consider a general non-Markovian nonlinear optimal stochastic control problem. Let $A$ be a separable metric space,
and ${\mathscr U}_s$ be the set of $A$-valued predictable processes on $[s,T]$.

For any triplet $(u, s, \xi)\in {\mathscr U}_s\times [0,T]\times L^2(\Omega, \mathscr{F}_s, P; \mathbb{R}^n)$, consider the following SDE:
$$
X_t=\xi+\int_s^t\sigma(r,X_r, u_r)\, dW_r+\int_s^tb(r,X_r,u_r)\, dr, \quad t\in [s,T].
$$

Assume that the following functions
$$\begin{array}{c}
\sigma(t,x,\alpha)\in \mathbb{R}^{n\times d},\quad  b(t,x,\alpha)\in \mathbb{R}^n,\\
  l(t,x, \a)\in \mathbb{R}, \quad g(x)\in \mathbb{R}; \qquad (t,x,\alpha)\in [0,T]\times \mathbb{R}^n\times A
 \end{array}$$ are continuous in $(x, \alpha)$ and continuous in $x$ uniformly over $\alpha$ for each $(t, \omega)$. Also, assume thatthere is positive constant $\l$ such that
$$\begin{array}{rcl}
\|\sigma(t,x,\alpha)-\sigma(t,y,\alpha)\|+|b(t,x,\alpha)-(t,y,\alpha)|&\le& \l |x-y|,\\
\|\sigma(t,x,\alpha)\|+|b(t,x,\alpha)|&\le& \l (1+|x|),\\
|l(t,x, \a)|+|g(x)|&\le& \l (1+|x|)^m.
\end{array}$$

For $(s, \xi, u) \in [s,T]\times L^2(\Omega, \mathscr{F}_s, P; \mathbb{R}^n)\times {\mathscr U}_s,$ define
$$\begin{array}{rcl}
J(u;s,\xi)&=&\displaystyle E^{s,\xi;u}\left[\int_s^Tl(t,X_t, u_t)\, dt+g(X_T)\biggm | {\mathscr F}_s \right],\\[4mm]
V(s,\xi)&:=&\displaystyle\hbox{\rm ess.}
\inf_{u\in {\mathscr U}_s} J(u; s, \xi).
\end{array}$$
Denote by $\mathbb{V}(s,\cdot)$ the restriction of $V(s,\cdot)$ to $\mathbb{R}^n$. In the nonlinear context, the restricted value field $\mathbb{V}$ can be proved to satisfy  the stochastic dynamic programming principle:

 (i) For $s\le t\le T$ and $\xi\in L^2(\Omega, \mathscr{F}_s, P; \mathbb{R}^n)$,
$$
\mathbb{V}(s,\xi)=\mbox{\rm ess.}\inf_{u\in {\mathscr U}_s} E^{s,\xi; u} \left \{\int_s^t l(r, X_r, u_r)\, dr+ \mathbb{V}(t, X_t)  \biggm | {\mathscr F}_s\right\}.
$$

(ii) For $(s,x,u)\in [0,T]\times R^n\times {\mathscr U}_s$, the process
$$\begin{array}{rcl}
 \k_t^{s,x;u}&:=&\displaystyle  \mathbb{V}(t,X_t^{s,x;u}) +\int_s^tl(r, X_r^{s,x;u}, u_r)\, dr
\end{array}
$$
defined for $t\in [s,T]$, is a submartingale w.r.t. $\{{\mathscr F}_t\}$.

Using the above dynamic programming principle and Kunita's stochastic calculus~\cite{Kunita1994}, we can still show that $\mathbb{V}$ is a Sobolev space valued semi-martingale and satisfy the associated backward Bellman equation in the strong sense. All the details shall be given in our forthcoming paper to extend Krylov~\cite{Krylov1972} to the non-Markovian framework for optimal stochastic control problem.

\section*{Acknowledgment} The main results and the methodology of the paper has been announced in my plenary talk at the 7th international symposium on backward stochastic differential equations (June 22-27, 2014), Weihai, Shandong Provence, China. The author would thank the organizers for kind hospitality.

\rm

\end{document}